\newcommand{\R}{{{\mathbb {R}}}}
\newcommand{\C}{{{\mathbb C}}}
\newcommand{\U}{{\mathbb U}}
\newtheorem{Theorem}{Theorem}
\newtheorem{Definition}{Definition}
\newtheorem {lemma} [Theorem]    {Lemma}
\newtheorem {corollary}  [Theorem]    {Corollary}
\newtheorem {Proposition}[Theorem]    {Proposition}
\newtheorem {proposition}[Theorem]    {Proposition}
\newcommand{\eps}{\epsilon}
\newcommand{\geps}{\Gamma_\epsilon}
\newcommand{\PP}{{ P}}
\newcommand{\EE}{{ E}}
\newcommand{\pa}[1]{\PP (A_{#1})}
\newcommand{\cir}{\partial \B}
\newcommand{\CC}{{\mathbb C}}
\newcommand{\clus}[2]{{\cal C} ({#1},{#2})}
\newcommand{\comment}[1]{}
\newcommand{\B}{{\mathcal B}}
\begin{document}

\title{Random soups, carpets and fractal dimensions}
\author{\c Serban Nacu${\null}^a$ \and Wendelin Werner\footnote {Research supported by the Agence Nationale pour la Recherche under the grant ANR-06-BLAN-0058}$ \hskip 6pt {\null}^b$}
\date {C.N.R.S${\null}^{a}$, Ecole Normale Sup\'erieure${\null}^{a,b}$ and Universit\'e Paris-Sud 11$\null^{b}$}
\maketitle

\begin {abstract}
We study some properties of a class of random connected 
planar fractal sets induced by a Poissonian scale-invariant and translation-invariant point process.
 Using the second-moment method, we show that their 
Hausdorff dimensions are deterministic and equal to their 
expectation dimension. We also estimate their low-intensity limiting behavior. 
This applies in particular to the ``conformal loop ensembles'' defined via Poissonian clouds of Brownian loops
 for which the expectation dimension
 has been computed by Schramm, Sheffield and Wilson.
\end {abstract}

MSC Classification: 28A80, 82B43, 28A78

\section{Introduction}

In this paper, we study certain random planar fractals that are close in spirit 
to random Cantor sets constructed via independent iterations.
Before describing the class of sets that we will focus on, let us first 
recall some features of  the ``classical'' planar random Cantor sets,
 sometimes known as Mandelbrot percolation or fractal percolation:

\medbreak

{\bf Mandelbrot percolation.}
Define a set $F$ by removing independently each dyadic square 
inside the unit square $[0,1]^2$ with probability $p$. 
This self-similar iterative procedure defines a random fractal that has been studied extensively \cite {Ma, MR, CCD, Fa, BC0}. 
In the case where one replaces dyadic by triadic, one gets a natural ``random Sierpinski carpet''.
Clearly, one can interpret the set $F$ as the limit of a Galton-Watson tree. This implies immediately that when $p \ge 3/4$, then $F$ is almost surely empty, and that when $p < 3/4$, then $P ( F \not= \emptyset )  > 0$.
It is then possible and easy to compute the Hausdorff dimension of $F$ as a function of $p$  (see for instance \cite {Fa}). 

In fact, when $p$ is small enough and the set $F$ is non-empty, $F$
has non-trivial connected components.
It seems clear that their dimensions must be a deterministic function of $p$, 
but in general it appears not possible to compute it explicitely in terms of $p$. 

The set $F$ is statistically invariant under dyadic scaling 
in a rather obvious sense: for a dyadic square $S= [0, 2^{-n}]^2$, 
the law of $2^n (S \cap F)$ conditioned by the event that it is non-empty 
is equal to the law of $F$ itself conditioned not to be empty. 
A similar invariance under dyadic translations can be stated. However, this 
invariance is restricted to ``dyadic transformations'' and it cannot 
be extended to more general maps.

\medbreak
{\bf Poisson models.}
A natural way to obtain stronger scale
and translation invariance is to define the set $F$ by
removing from $[0,1]^2$ all squares of a statistically 
translation-invariant and scale-invariant Poisson point process of squares. 
In fact, one could also replace these squares by other planar shapes, such as line segments or disks.
This gives rise to self-similar Poisson percolation models, 
as studied for instance in \cite {ZS1,ZS2,BC}. 
In these papers, the focus is on the existence and the nature of the phase transition for 
the connectivity property of $F$ in terms of the intensity $c$ of the Poisson point process (that replaces the factor $\log (1/(1-p))$):
For small $c$ (i.e.  small $p$), the set $F$ can have non-trivial connected components, and one can define 
 the ``carpet'' $G$ that consists of all points of $(0,1)^2$ that can be connected 
to the boundary of the unit square by a path that remains in $F$
(a typical point in $F$ will in fact not satisfy this property; see 
Figure~\ref{square_soup} for an example, and section~\ref{remaining} 
for a more complete discussion).

\begin{figure}\label{square_soup}
\centerline{\includegraphics*[height=3in]{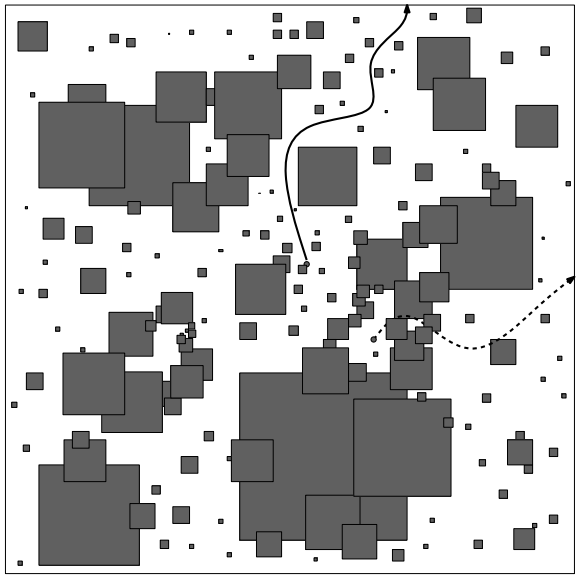}}
\caption {
Sketch of a Poisson square soup. The point at the end of the solid
line can be connected to the boundary by a path that does not cross
the soup, and thus belongs to the carpet. The point at the end of
the dotted line is completely surrounded by the soup and thus is not
in the carpet. 
}
\end {figure}

\medbreak
{\bf A special case: Brownian loop soups.}
Recently, it has been pointed out that a special and interesting case is 
to remove the interiors of Brownian loops instead of squares. 
In other words, one removes the interiors of a Poisson collection of 
Brownian loops called the Brownian loop-soup and introduced in \cite {LWls}. 
Indeed, planar Brownian motion is conformally invariant, so that this collection of loops 
is not only scale-invariant, but also conformally invariant in a rather strong sense 
(related to the conformal restriction property described in \cite {LSWrest,Wsal}). 
This allows \cite {Wcras,ShW} to derive links
with other conformally invariant objects such as the Gaussian Free Field 
or the Stochastic Loewner Evolutions (SLE), via the notion of 
Conformal Loop Ensembles (CLE) studied in \cite {Sh,ShW}. 
The link with SLE enables the study of many properties of those random Cantor sets 
(see Sheffield-Werner \cite {ShW, ShW2}):
\begin {itemize}
\item
We know the carpet is non-trivial if and only if $0 < c \le 1$ 
(the value $1$ depends of course on the choice of normalization 
for the Brownian loop measure).
\item
The complement of the carpet is made of disjoint ``holes''  whose boundaries
are SLE($\kappa$) loops for some explicit $\kappa=\kappa (c)$ 
and the dimension of these loops is known (\cite {Be,Ladim}). 
\item
Schramm, Sheffield and Wilson \cite {SchShW} have computed (as a function of $\kappa$)
the ``expectation dimension'' 
of the carpet that measures the mean number of $\epsilon$-balls needed to cover it.
\end {itemize}
This last result was our initial motivation for the present paper. 
We show here that the Hausdorff dimension of the carpet is deterministic and
equals the expectation dimension.
In combination with the connection between 
CLEs and Brownian loop soups derived in \cite {ShW,ShW2} and
the explicit formula for the expectation dimension in \cite{SchShW}, 
this completes the determination 
of the almost sure Hausdorff dimensions of CLE carpets.

More generally, our paper illustrates the fact that the ``Brownian loop-soup'' 
approach to CLE and SLE can be helpful in the derivation of second-moment estimates that are
used to determine Hausdorff dimensions (these second-moment estimates can turn out to be difficult to handle directly in the SLE setting, see 
for instance \cite {Be} for the dimension of the SLE curve itself). 
\begin{figure}\label{brownian_soup}
\centerline{\includegraphics*[height=250pt]{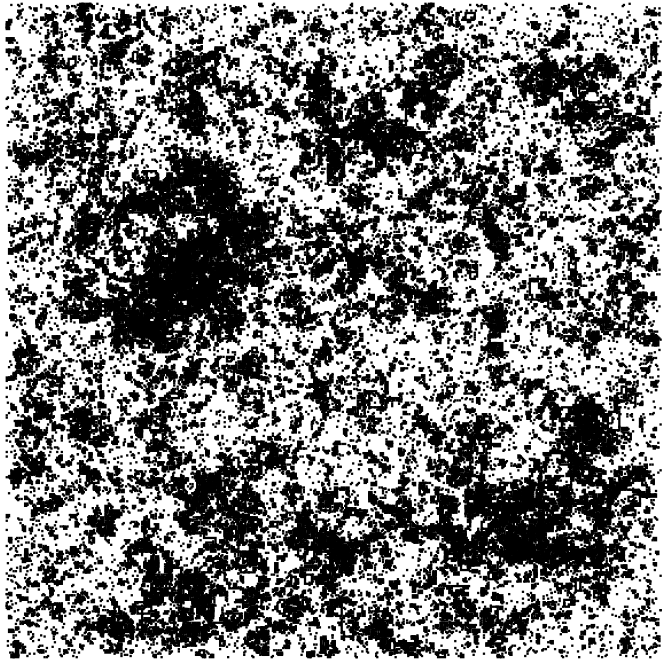}}
\caption {
Sketch of a Brownian loop soup in the unit square. Brownian loops are generated according
to a Poisson point process. Only loops above a certain diameter are
drawn; the actual loop soup is in fact dense in the square and no loop touches the boundary of the square.
This is in fact a random walk loop-soup approximation of the Brownian loop-soup, see \cite {LTF}.
}
\end {figure}
\medbreak

{\bf The contribution of the present paper.}
We study the dimension of the random carpet 
obtained from a Poisson Point process corresponding to a self-similar 
and translation invariant measure $\mu$. 
In particular, we show that the carpet's Hausdorff dimension $d(c)$ is non-random, 
that it is equal to its ``expectation dimension'', and we  interpret the first terms 
of the expansion of $d(c)$ as $c \to 0+$ in terms of $\mu$. 
The techniques that we use are rather classical, and are based on 
second-moment estimates (see e.g. \cite {MP}).

One can also note that in the Brownian loop-soup case, the first term in the expansion $2-d(c)$ when $c \to 0+$ is related to 
the expected area of the filled Brownian loop computed by \cite{GTF}.

\medbreak

The paper is organized as follows. 
In Section 2, we define  a class of invariant Poisson point processes
on planar curves and prove some of their elementary properties.
In Section 3 we show that the Hausdorff dimensions of 
random carpets defined by a random soup are deterministic,
and that first moment computations are enough to determine these dimensions.
In Section 4, we study the dimension of the carpet when $c \to 0+$ 
using an approximation of the carpet by a simpler set.

\section {Setup}

\subsection{The measures}

We are interested in measures $\mu$ on the set of compact planar curves ${\cal U}$
(a curve $\gamma$ is defined here as the image of a continuous function 
from $[0,1]$ into the complex plane). We will not need to use a strong topology on this set,
so we can just view these curves as compact subsets of the plane and 
simply use the Hausdorff topology.

 We say that the measure $\mu$ is scale-invariant (resp. translation-invariant) 
if it is invariant under the transformations $\gamma \mapsto \rho \gamma$ 
for all $\rho >0$ (resp. $\gamma \mapsto z + \gamma$ for all $z \in \CC$). 

We will focus on the case where the measure $\mu$ is {\bf ``locally finite''}
in the sense that the $\mu$-mass of the set of curves $\gamma$ with diameter greater 
than $1$ that are included in a $2 \times 2$ square is finite 
(note that we could replace $2 \times 2$ by $r \times r$ for any given $r > 1$). 
Local finiteness is in fact necessary in our setup, otherwise the carpets that we define
are almost surely empty.

We define ${\cal M}$ to be the set of all translation-invariant, 
scale-invariant and locally finite measures $\mu$ that are not ``one-dimensional''. 
More precisely, we require that for any real $\theta$, 
$$ 
\mu ( \{\gamma \ : \  \exists x , y  \in \gamma \ : \ (x-y) \notin e^{i \theta} \R \} )  \not= 0  
$$
(note that if this quantity is not $0$, then it is infinite because of translation-invariance). 
We need this condition to exclude degenerate cases such as $\mu$ being supported on line
segments parallel to the $x$-axis.

Throughout this paper, $|S|$ will denote the diameter of a bounded set $S$.
We denote by $\B(z,r)$ the ball centered at $z$ with radius $r$,
and by $\cir(z,r)$ its boundary circle.
We define ${\cal A} (S)$ to be the area of the set ``surrounded'' by $S$, 
that is, of the complement of the unbounded connected component of the complement of $S$
(if $S$ is a circle, then ${\cal A} (S)$ is the area of the corresponding disc). 
For a non-empty compact set $K$, we define $Z (K)$ as the point in $K$ 
with the smallest $x$-coordinate (if there are several of them, 
choose the one among them with smallest $y$-coordinate).

Our first result states that any measure in ${\cal M}$ can be constructed as
the product of three components that describe location, scale, and shape.

\begin {lemma} \label {l1}
\begin {itemize}
\item
Suppose that $\pi$ is a measure on the set of compact planar curves such that 
$\pi ( |\gamma|^2 ) < \infty$.
Then, consider the following product measure on 
$\R^2 \times (0, \infty) \times {\mathcal U}$:
 $$d^2 z \otimes \frac {d\rho }{\rho} \otimes \pi ( d\gamma).$$ 
If we define $\gamma' = \rho (z + \gamma)$, the previous measure induces 
a measure $\mu'$ on the set of compact planar curves defined by: 
\begin {equation}
\label {procedure}
\int F (\gamma') \mu' (d \gamma') = 
\int F ( \rho ( z + \gamma)) d^2 z \ \frac {d \rho} {\rho} \    \pi ( d\gamma) 
\end {equation}
for all measurable bounded positive $F$. 
Then, the measure $\mu'$ is translation-invariant, scale-invariant and locally finite.
\item
Conversely, any measure $\mu$ in ${\cal M}$ can be constructed in this way. 
In fact, it is always possible to construct 
$\mu$ via (\ref {procedure})  starting from a finite measure $\pi$ that is 
supported on the set of curves with diameter $|\gamma| = 1$ 
that are included in the square $[-1,1]^2$.
\item
Finally, if a measure $\mu \in {\cal M}$ can be defined starting 
from two different finite measures $\pi$ and $\pi'$ as before then 
$\pi ( |\gamma|^2) = \pi' ( |\gamma|^2)$ and 
$\pi ( {\cal A} ( \gamma ) ) = \pi' ({\cal A} ( \gamma) ) < \infty$.
We will denote this last quantity by $\beta ( \mu)= \beta (\pi)$.
\end {itemize}
\end {lemma}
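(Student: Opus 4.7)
The plan is to organize everything around the bijection $T: (z, \rho, \eta) \mapsto z + \rho\eta$ from $\mathbb{C} \times (0, \infty) \times \mathcal{U}_0$ onto the set of non-degenerate compact curves, where $\mathcal{U}_0 = \{\eta : |\eta| = 1,\ Z(\eta) = 0\}$. Its inverse $\gamma \mapsto (Z(\gamma), |\gamma|, (\gamma - Z(\gamma))/|\gamma|)$ is measurable for the Hausdorff topology. For the first bullet, I would verify the three invariance properties of $\mu'$ directly from (\ref{procedure}): translation invariance follows from the invariance of $d^2 z$, and scale invariance from the substitution $\rho \mapsto \rho/\lambda$ together with the invariance of $d\rho/\rho$. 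For local finiteness, observe that $\gamma' = \rho(z+\gamma) \subset [-1,1]^2$ with $|\gamma'| > 1$ forces $\rho|\gamma| \in (1, 2\sqrt{2}]$ and confines $z$ to a region of area at most $4/\rho^2$; integrating bounds the $\mu'$-mass of this set of curves by a constant times $\pi(|\gamma|^2)$, which is finite by hypothesis.

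For the second bullet, I would pull $\mu$ back by $T$ to get a measure $\tilde\mu$ on $\mathbb{C} \times (0,\infty) \times \mathcal{U}_0$, which is $\sigma$-finite thanks to local finiteness applied at every dyadic scale (via scale invariance). Translation invariance of $\mu$ transfers to invariance of $\tilde\mu$ under translations in $z$, so a standard disintegration yields $\tilde\mu = d^2 z \otimes \sigma$ for some $\sigma$ on $(0, \infty) \times \mathcal{U}_0$. Scale invariance of $\mu$ becomes invariance of $\tilde\mu$ under $(z, \rho) \mapsto (\lambda z, \lambda \rho)$; absorbing the Jacobian of the $z$-substitution shows that the conditional $\rho$-marginal $\sigma_\eta$ satisfies $(S_\lambda)_\ast \sigma_\eta = \lambda^2 \sigma_\eta$ where $S_\lambda \rho = \lambda\rho$, pinning it down to $\sigma_\eta(d\rho) = c(\eta)\, d\rho/\rho^3$. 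Hence $\sigma(d\rho, d\eta) = (d\rho/\rho^3) \otimes \pi(d\eta)$ for a measure $\pi$ on $\mathcal{U}_0$, and substituting $z = \rho\zeta$ recovers (\ref{procedure}) with this $\pi$, supported in $\mathcal{U}_0 \subset \{|\gamma| = 1,\ \gamma \subset [-1,1]^2\}$. Finiteness of $\pi$ follows from applying local finiteness of $\mu$ to the thin slab $\rho \in (1, 1+\delta)$: on this slab the area of admissible $\zeta$ is bounded below uniformly in $\eta \in \mathcal{U}_0$ (both sides of the bounding box of any $\eta \in \mathcal{U}_0$ are at most $1$), so the local mass dominates a positive multiple of $\pi(\mathcal{U}_0)$.

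For the third bullet, given any $\pi$ realizing $\mu$ via (\ref{procedure}), I would form $\pi^\ast$, the pushforward of $|\gamma|^2 \pi(d\gamma)$ under $\gamma \mapsto \tilde\gamma = (\gamma - Z(\gamma))/|\gamma|$; this is a finite measure on $\mathcal{U}_0$ of total mass $\pi(|\gamma|^2)$. The change of variables $u = \rho z + \rho Z(\gamma)$ and $r = \rho |\gamma|$ (Jacobian $|\gamma|^2/r^2$ on $d^2z$, together with $d\rho/\rho = dr/r$), followed by $u = r\zeta$, shows that (\ref{procedure}) produces the same $\mu$ with $\pi^\ast$ in place of $\pi$. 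By the uniqueness of the disintegration in the second bullet, $\pi^\ast$ is determined by $\mu$ alone, so $\pi(|\gamma|^2) = \pi^\ast(\mathcal{U}_0)$ is $\mu$-intrinsic; the homogeneity $\mathcal{A}(\gamma) = |\gamma|^2 \mathcal{A}(\tilde\gamma)$ then gives $\pi(\mathcal{A}(\gamma)) = \pi^\ast(\mathcal{A})$, likewise $\mu$-intrinsic and finite since $\mathcal{A}$ is bounded on $\mathcal{U}_0$.

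The main technical hurdle is the disintegration in the second bullet: establishing measurability of $T$ and $T^{-1}$ for the Hausdorff topology, checking $\sigma$-finiteness of $\tilde\mu$, and cleanly deducing the form $c(\eta)\, d\rho/\rho^3$ from the scaling relation $(S_\lambda)_\ast \sigma_\eta = \lambda^2 \sigma_\eta$. The non-degeneracy condition defining $\mathcal{M}$ plays no explicit role in this lemma; it will matter only in later sections when passing from $\mu$ to the carpet.
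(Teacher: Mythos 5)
Your proposal is correct, but it proves the converse and the uniqueness statements by a different mechanism than the paper. For the second bullet, the paper never performs an abstract disintegration: it defines $\pi$ directly as a normalized restriction of $\mu$, namely $\pi(F)=\tfrac23\,\mu\bigl(F(\gamma)1_{|\gamma|\in[1,2]}1_{Z(\gamma)\in[-1,1]^2}\bigr)$ for scale- and translation-invariant $F$ (finiteness of $\pi$ being immediate from local finiteness), then checks that the measure built from this $\pi$ via (\ref{procedure}) assigns the same mass as $\mu$ to all test functionals $F(\gamma)f(|\gamma|)g(Z(\gamma))$ and concludes $\mu=\tilde\mu$ through the product representation $\gamma=Z(\gamma)+|\gamma|\gamma_0$. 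Your route instead pulls $\mu$ back under $T$, factors out Lebesgue measure by translation invariance, and pins the radial factor to $c(\eta)\,d\rho/\rho^3$ by Haar-type uniqueness; this is cleaner conceptually and makes the uniqueness of the normalized representative explicit, but it carries the measure-theoretic overhead ($\sigma$-finiteness on slabs, measurability of the map $\gamma\mapsto(Z(\gamma),|\gamma|,\tilde\gamma)$, measurable dependence of $c(\eta)$) that you rightly flag, and which the paper's hands-on construction avoids. For the third bullet the difference persists: the paper evaluates $\mu$ on explicit scale-covariant sets such as $\{Z(\gamma)\in[0,1]^2,\ \mathcal{A}(\gamma)\ge 1\}$, finding the value $\pi(\mathcal{A}(\gamma))/2$ for every representing $\pi$ (and likewise $\pi(|\gamma|^2)/2$), so the quantities are intrinsic without any appeal to uniqueness; your argument instead renormalizes an arbitrary representative into $\pi^*$ (the pushforward of $|\gamma|^2\pi$ under $\gamma\mapsto(\gamma-Z(\gamma))/|\gamma|$, with the change of variables you indicate being correct) and then invokes uniqueness of the normalized representative, which is not stated in the lemma but is supplied by your own disintegration proof of the second bullet — you should state that uniqueness explicitly as an intermediate claim, since it also delivers finiteness ($\pi^*$ coincides with the finite normalized measure of the second bullet, whence $\pi(|\gamma|^2)<\infty$ and $\pi(\mathcal{A}(\gamma))\le\pi^*(\mathcal{U}_0)<\infty$, matching the paper's closing remark). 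Your first-bullet computation and the slab argument for finiteness of $\pi$ are essentially the paper's, up to bookkeeping constants.
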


Note that defining $\gamma'= \rho (z +  \gamma)$ as above, 
or $\gamma''=  z'' +\rho  \gamma$ under 
$d^2 z'' \otimes (d \rho / \rho^3) \otimes \pi (d \gamma)$ is the same 
(via the change of variable $z''=\rho z$). 
That is, we can either translate first or scale first, but the distribution
of the scaling factor differs in the two cases.

\begin {proof}
\null
\begin {itemize}
\item
Let us suppose that $\pi$ is a measure on the set of compact planar curves 
such that $\pi ( |\gamma|^2 ) < \infty$.
Then, the measure $\mu'$ defined via (\ref {procedure}) is clearly 
scale-invariant and translation invariant. We have to check that it is locally finite. 
Let us compute the $\mu'$-mass of curves that fall in the square 
$[-1,1]^2$ and have diameter greater than $1$.
Note first that
 $$\int 1_ {|\rho \gamma | \ge 1 } \frac {d \rho}{ \rho^3 }  \pi (d \gamma) 
 = \pi \left( \int_{1/|\gamma|}^{\infty} \frac {d \rho}{  \rho^3 } \right) 
 =  \pi ( |\gamma|^2 ) / 2.$$ 
But for each given $\gamma$, the Lebesgue measure of $\{z \ : \ Z( z + \gamma) \in [-1, 1]^2 \}$  
is $4$. Hence, integrating the previous identity 
over $z$, it follows that the $\mu'$ mass of the set of curves that have
diameter greater than $1$ and are subsets of $[-1,1]^2$ is bounded by 
$4 \pi ( |\gamma|^2) / 2$. This ensures that $\mu' \in {\cal M}$. 

\item
Conversely, suppose that $\tilde \mu \in {\cal M}$. 
We would like to find a corresponding measure $\pi$.
Intuitively $\pi$ should be the ``projection'' of $\tilde \mu$
on the set $K$ of curves $\gamma$ that have diameter $1$ and such that the point  
$Z (\gamma)$  is at the origin.
We can write any curve $\gamma = |\gamma| \cdot \gamma_0 + Z(\gamma)$
where $\gamma_0 \in K$, and this induces a representation of the measurable
space ${\mathcal U}$ as a product ${\R}^2 \times K \times {\R}$.
Therefore, if we define, for all translation-invariant and scale-invariant functions $F$
on planar curves,
$$ \pi ( F) = \frac {2}{3} 
\tilde \mu ( F ( \gamma ) 1_{| \gamma | \in [1,2] } 1_{ Z ( \gamma) \in [-1,1]}), $$
this induces a measure $\pi$ on $K$.
 Since $\tilde \mu$ is locally finite, the measure $\pi$ has finite mass 
(take $F=1$ in the previous expression). 
 We then define $\mu$ from $\pi$ as in (\ref {procedure}).  
For $F$ translation-invariant and scale-invariant, we have
\begin{eqnarray*}
\mu ( F ( \gamma ) 1_{| \gamma | \in [1,2] } 1_{ Z ( \gamma) \in [-1,1]})
& = &
\int F ( z+\rho\gamma ) \, 1_{| z+\rho\gamma | \in [1,2] } \, 1_{ Z ( z+\rho\gamma) \in [-1,1]}
\, d^2 z \times \frac {d\rho}{ \rho^3} \times \pi(d\gamma) \\
& = &
\int F (\gamma ) \, 1_{\rho \in [1,2] } \, 4
\times \frac {d\rho}{\rho^3} \times \pi(d\gamma) \\
& = &
\frac 3 2 \pi(F) \\
& = &
 \tilde \mu \bigl( F ( \gamma ) 1_{| \gamma | \in [1,2] } 1_{ Z ( \gamma) \in [-1,1]}\bigr) .
 \end{eqnarray*}

 Clearly, the measure $\mu$ is also scale-invariant and translation-invariant. 
It follows that for any $f$ and $g$, 
 $$ \mu ( F( \gamma) f( |\gamma|) g ( Z (\gamma) )) = 
\tilde \mu ( F( \gamma) f( |\gamma|) g ( Z (\gamma) ))$$
and finally, using the product representation of $\mathcal U$, we get $\mu=\tilde \mu$.
 
\item
To check the final statement, compute
the $\mu$-mass of the set of curves $\gamma$ 
such that $Z ( \gamma) \in[0,1]^2$ and ${\cal A} ( \gamma) \ge 1$ 
in terms of $\pi$. This equals
$$ 
\int_{[0,1]^2} d^2 z \times \int_0^\infty \int_{\cal U} 
\frac {d \rho}{\rho^3} d\pi ( \gamma) 1_{{\cal A} (\rho \gamma) \ge 1 } 
= \pi \left( \int_{1 / \sqrt {{\cal A} (\gamma)} }^\infty 
\frac {d \rho}{\rho^3} \right) 
= \pi ( {\cal A} ( \gamma) )/ 2.
$$
This last quantity is therefore the same for $\pi$ and $\pi'$. 
Exactly the same argument implies that $\pi ( |\gamma|^2) = \pi' ( |\gamma|^2)$
and in fact works for any function of $\gamma$ that scales with $|\gamma|^2$.
Since any locally finite $\mu$ can be constructed from a measure $\pi$
supported on curves of diameter 1, it follows that $\pi ( |\gamma|^2 )$ is finite.
\end {itemize}
\end {proof}

\medbreak
It is worth emphasizing that the scale-invariance of the measures 
$\mu \in {\cal M}$ is with respect to the transformations 
$\gamma \mapsto \lambda \gamma$ that
move the point $Z(\gamma)$ away from the origin when $\lambda$ is large. 
One could also study how $\mu$ behaves under the transformations 
$$ T_\lambda : \gamma \mapsto Z(\gamma) + \lambda ( \gamma - Z (\gamma))$$
that magnify $\gamma$ without changing its lowest-left-most point. 
The lemma in fact implies that the image of $\mu$ 
under $T_\lambda$ is $\lambda^2 \mu$.

\subsection {Soups}

Recall that a Poisson point process with intensity $\mu$
where $\mu \in \cal M$ is a random countable collection of curves
 $\Gamma = (\gamma_j, j \in J)$ 
in the plane such that for any 
disjoint (measurable) sets of curves $A_1, \dots, A_n$, 
the numbers $N(A_1), \ldots, N (A_n)$ of 
curves in $\Gamma$ that are respectively in $A_1, \ldots, A_n$ 
are independent Poisson random variables 
with respective mean $\mu (A_1), \ldots, \mu (A_n)$ (when $\mu (A_j)= \infty$, then $N(A_j)= \infty$ almost surely).
We call this a {\bf random soup} in the plane with intensity $\mu$.

For any domain $D \subset \CC$, we define the random soup in $D$ as the set
of all curves in the soup $\Gamma$ that are contained in $D$. 
In other words, if $J_D= \{ j \in J \ : \ \gamma_j \subset D \}$, 
then the soup in $D$ is $\Gamma_D= ( \gamma_j, j \in J_D)$. 
Note that $\Gamma_D$ is itself a Poisson point process with intensity 
$\mu_D = \mu 1_{\gamma \subset D}$.

The properties of $\mu$ ensure that $\Gamma$ is 
translation-invariant and scale-invariant. The fact that $\mu$ is locally finite 
implies that almost surely  for each bounded domain $D$ and each $r>0$, $\Gamma_D$ 
contains only a finite number of curves of diameter greater than $r$. 

A few examples of natural measures $\pi$ and their corresponding random soups are the following:
\begin{itemize}
\item The measure  $\pi$ is a constant times the law of a Brownian loop of time-length 1: this defines
the {\bf Brownian loop soups} introduced in \cite {LWls}. 
It is immediate to check that the measure $\mu$ is locally finite 
in our sense (i.e. just check that $\pi ( |\gamma|^2 ) < \infty)$. 
\item
We can also look at the law of the outer boundary of the Brownian loop of time-length $1$. 
This is in fact a SLE($8/3$) loop that also corresponds to scaling limits of 
critical percolation cluster outer boundaries \cite {Wsal}.
It defines a soup of ``outer boundaries of Brownian loops''. 
Local finiteness is a consequence of that of the Brownian loop measure.

\item The measure $\pi$ is a constant times the Dirac mass supported on the unit circle
(so $\mu$ is also supported on circles): this is the {\bf circle soup}.

\item The measure $\pi$ is a constant times the law of the segment with endpoints $u$ and $-u$,
where $u$ is chosen uniformly at random on the unit circle (so $\mu$ is supported
on segments): this is the {\bf stick soup}. A variant of this is the
{\bf discrete stick soup}, where $u$ is a uniformly chosen random vertex of
a regular $n$-gon.
\end{itemize}

We now define a condition on $\mu$ that will roughly prevents the soups from being too ``dense''.

\begin{lemma}\label{mu_cross}
Let $L_R$ be the set of curves that intersect the unit disk
and have diameter at least $R$. Suppose that $\mu \in {\cal M}$. 
Then the following statements are equivalent:
\begin {itemize}
\item
For some $R> 0$, $\mu(L_R) < \infty$.
\item
For all $R> 0$, $\mu (L_R) < \infty$.
\item 
The measure $\mu$ is constructed as in Lemma \ref {l1} via (\ref {procedure}) from a finite measure $\pi$ 
such that $\pi ( |\gamma|^2) < \infty$ holds and that satisfies
$$ \int_0^1 \frac {dr}{r}  \times \pi ( \int d^2 z \ 1_{d ( z, \gamma) \le r })  < \infty.$$ 
\end {itemize}
When the statements hold, we say that the corresponding soup is {\bf thin}.
\end {lemma}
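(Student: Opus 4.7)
My plan is to base everything on Lemma~\ref{l1}: I would write $\mu$ via (\ref{procedure}) with $\pi$ a finite measure supported on curves of diameter $1$ contained in $[-1,1]^2$, and then compute $\mu(L_R)$ directly in terms of $\pi$. The implication (2) $\Rightarrow$ (1) is trivial since $L_R \supset L_{R'}$ whenever $R \le R'$, so the real content of the lemma is the chain (1) $\Rightarrow$ (3) $\Rightarrow$ (2), and all three will follow from one explicit formula.

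The heart of the computation is the identity
$$ \mu(L_R) \;=\; \int_0^{1/R} \frac{dr}{r}\, \pi\bigl(A(\gamma,r)\bigr), \qquad A(\gamma, r) := \int d^2 z\, 1_{d(z,\gamma) \le r}. $$
To derive it, I would plug $F = 1_{L_R}$ into (\ref{procedure}). For $\gamma$ in the support of $\pi$, the rescaled-translated curve $\rho(z+\gamma)$ has diameter exactly $\rho$, and it meets $\B(0,1)$ iff $z$ belongs to the $(1/\rho)$-neighborhood of $-\gamma$, a set whose Lebesgue measure equals $A(\gamma,1/\rho)$. Fubini and the substitution $r = 1/\rho$ then yield the displayed formula.

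From this identity the equivalences drop out. Since $\gamma \subset [-1,1]^2$ for $\pi$-a.e.\ $\gamma$, the function $r \mapsto A(\gamma,r)$ is bounded by $(2+2r)^2$, and hence $(dr/r)\,\pi(A(\gamma,r))$ is integrable on every $[a,b]$ with $0 < a \le b < \infty$. The only possible source of divergence is the singularity at $r = 0$, so finiteness of $\mu(L_R)$ for some $R$ is equivalent to finiteness for every $R$, and also equivalent, after one more application of Fubini, to the integral bound in (3). Note that the a priori bound $\pi(|\gamma|^2) < \infty$ is automatic from Lemma~\ref{l1}.

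The step that requires the most care is the geometric identification of $\{z : \rho(z+\gamma) \cap \B(0,1) \neq \emptyset\}$ as the $(1/\rho)$-neighborhood of $-\gamma$, together with the measure-theoretic bookkeeping (measurability in $\gamma$ and the repeated applications of Fubini, which are justified by positivity of the integrands). Everything else reduces to a routine comparison of integrals.
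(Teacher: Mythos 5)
Your argument is correct and is essentially the paper's own proof: both reduce everything to the identity $\mu(L_R)=\int_0^{1/R}\frac{dr}{r}\,\pi\bigl(\int d^2z\,1_{d(z,\gamma)\le r}\bigr)$ for the canonical $\pi$ of Lemma~\ref{l1} supported on diameter-one curves (you compute it in the $\rho(z+\gamma)$, $d\rho/\rho$ parametrization of (\ref{procedure}), the paper in the equivalent $z+\rho\gamma$, $d\rho/\rho^3$ form), and then observe that the integral can only diverge near $r=0$, so the three statements coincide.
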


Note in particular 
that if a soup is thin, then the (two-dimensional) Lebesgue measure of $\gamma$ is $\pi$-almost surely (and therefore $\mu$-almost surely)
equal to zero (otherwise the third statement would not hold) 
so that a given point in the plane belongs almost surely to no curve of the corresponding soup. 

\begin{proof}
Suppose that $\mu \in {\cal M}$ is defined via (\ref {procedure}) 
from a measure $\pi$ supported on the set of curves $\gamma$ 
with $\pi ( |\gamma|^2) < \infty$ and $Z ( \gamma)= 0$.
The curve
$z + \rho \gamma$ has diameter $\rho |\gamma|$, and it intersects
the unit disk if and only if  $-z / \rho$ is at a distance less than $1/\rho$
from $\gamma$. Hence, for all $R$, we have
\begin{eqnarray*}
\mu(L_R) 
& = & \int d^2 z \int_R^\infty \frac {d \rho}{\rho^3} \int \pi ( d \gamma) 1_{ d(0, z+ \rho \gamma) \le 1 } \\
& = & \int d^2 z \int_R^\infty \frac {d \rho}{\rho^3} \int \pi ( d \gamma) 1_{ d(z, \rho \gamma) \le 1 } \\
& = & \int_R^\infty  \frac {d \rho}{\rho^3} \int \pi ( d \gamma)  \int\rho^2 \   d^2 z' \  1_{ d(z' , \gamma) < 1/ \rho } \\
& = & \int_0^{1/R} \frac {dr}{r} \pi ( \int d^2 z \ 1_{ d (z,\gamma) \le r } ) 
\end{eqnarray*}
The equivalence between the three statements follows readily 
(note that this integral in $r$ can diverge only near $r=0$).
\end {proof}

\begin{corollary}\label{thin_large_R}
\begin{itemize}
\item[(i)] If a soup is thin, then $\lim_{R \to \infty} \mu(L_R) = 0$.
\item[(ii)]
A random soup is thin if and only if almost surely, for every ring in the plane, 
only a finite number of curves in $\Gamma$ do intersect 
both the inner circle and the outer circle.
\end{itemize}
\end{corollary}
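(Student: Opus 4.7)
The plan is to combine the explicit formula for $\mu(L_R)$ derived in the proof of Lemma~\ref{mu_cross} with the elementary dichotomy that a Poisson random variable is almost surely finite exactly when its mean is finite. Part (i) is then immediate from the identity
$$
\mu(L_R) \;=\; \int_0^{1/R} \frac{dr}{r}\ \pi\Bigl(\int d^2 z \ 1_{d(z,\gamma)\le r}\Bigr)
$$
recorded in that proof: thinness says the integrand is integrable on $(0,1)$ as a function of $r$, and since it is nonnegative, the tail $\int_0^{1/R}$ must tend to $0$ as $R\to\infty$.

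For the easy direction of (ii), from the ring-condition to thinness, I would fix the concrete annulus $A$ centered at $0$ with inner radius $1$ and outer radius $2$, and show that every $\gamma\in L_6$ must cross it. Indeed, $\gamma$ meets the unit disk at some point $p$, and a triangle-inequality argument gives $\sup_{q\in\gamma}|p-q|\ge 3$ (otherwise the diameter of $\gamma$ would be strictly less than $6$), so $\gamma$ contains a point $q$ with $|q|\ge 2$; then connectedness of $\gamma$ together with continuity of $|\cdot|$ along a parametrization force $\gamma$ to meet both boundary circles of $A$. Therefore $\mu(\{\gamma\text{ crossing }A\})\ge\mu(L_6)$, so if the soup were not thin then Lemma~\ref{mu_cross} would give $\mu(L_6)=\infty$, the Poisson count of curves in $\Gamma$ crossing $A$ would be almost surely infinite, and the ring-condition would fail.

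For the converse, the genuine step is upgrading the obvious fixed-ring statement to a uniform one. By translation- and scale-invariance, the $\mu$-mass of the set of curves that meet $\B(z_0,M)$ and have diameter at least $r$ equals $\mu(L_{r/M})$, which is finite under thinness, so the corresponding Poisson count is almost surely finite for each fixed $(z_0,M,r)$. Restricting $(z_0,M,r)$ to a countable family (rational centers, positive rational $M$ and $r$) and taking a countable union of null events, together with an easy monotonicity comparison, yields almost surely the uniform statement
$$
(\star)\qquad\text{for every bounded }D\text{ and every }r>0,\quad \#\{j:\gamma_j\cap D\ne\emptyset,\ |\gamma_j|\ge r\}<\infty.
$$
Given $(\star)$, every ring $A(z_0,r_1,r_2)$ is handled at once, since curves meeting both of its boundary circles have diameter $\ge r_2-r_1>0$ and meet the bounded set $\B(z_0,r_2)$. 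The only mildly delicate point in the whole argument is precisely this passage from ``fixed ring almost surely'' to ``every ring simultaneously almost surely'', and the countable-family-plus-monotonicity trick just described is what resolves it.
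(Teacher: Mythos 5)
Your proof is correct and takes essentially the same route as the paper: part (i) from the tail of the integral formula (the paper invokes dominated convergence for the same fact), and part (ii) by comparing ring-crossing events with the sets $L_R$ in one direction and, in the other, passing from a countable family of rational rings to all rings by monotonicity. If anything, your explicit comparison ($L_6$ against the annulus $1<|z|<2$) is slightly more careful than the paper's stated inclusion $L_{R+1}\subset S_{1,R}$, which as written requires a larger diameter threshold to be literally true.
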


\begin {proof}
The first statement follows from the dominated convergence theorem.

For the second part, let $S_{a,b}$ be the set
of curves that intersect both circles of radius $a$ and $b$.
It follows easily that for $R>1$, $L_{R+1} \subset S_{1,R} \subset L_{R-1}$.
If $S_{1,R}$ is finite, then so is $L_{R+1}$, and Lemma~\ref{mu_cross}
implies that the soup is thin. Conversely, if the soup is thin,
then $L_{R-1}$ is finite, so all $S_{1,R}$ are finite, and by scale
invariance so are all $S_{a,b}$.
The rings with rational radii form a countable dense set among all rings,
and this completes the proof.
\end {proof}

The circle soups, the square soups and the stick soups are obviously thin. 
The third condition in the lemma shows that as soon as the mean area of the 
$r$-neighborhood of $\gamma$ (defined under $\pi / |\pi|$) 
decays for instance faster than $1/\log (1/r)^2$ as $r \to 0$,  the 
corresponding $\mu$-soup is thin.
For instance, if $\pi$ is supported on curves with Hausdorff dimension $d<2$,
the size of the neighborhood decays like $O(r^{2-d})$ and the soup should be 
thin, assuming the bound holds in expectation.

The Brownian loop-soup is not thin, but we now show that the soup 
of its outer boundaries is thin 
(and this will be enough for our purposes since they will define the same carpets).

\begin {lemma}
The soup of outer boundaries of Brownian loops is thin.
\end {lemma}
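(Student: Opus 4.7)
The plan is to verify the third (hence all) conditions of Lemma \ref{mu_cross}. By that lemma one can represent $\mu$ via a finite measure $\pi$ supported on outer boundaries $\gamma$ of Brownian loops of diameter one with $Z(\gamma)=0$. Each such $\gamma$ is contained in a fixed bounded window, say $\B(0,3)$, so writing $N_r(\gamma)=\{z\in\C:d(z,\gamma)\le r\}$ and using Fubini, the condition to check is
$$\int_0^1 \frac{dr}{r}\, \pi\bigl(\leb(N_r(\gamma))\bigr) < \infty,$$
where $\leb$ denotes planar Lebesgue area. It is therefore enough to show that the expected area $\pi(\leb(N_r(\gamma)))$ is bounded by $C r^\alpha$ for some fixed $\alpha>0$ and all $r\in(0,1)$; the outer integral then becomes $C\int_0^1 r^{\alpha-1}\,dr<\infty$.

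This is precisely where the outer-boundary soup differs from the Brownian loop soup. The trace of a Brownian loop has Hausdorff dimension $2$, which is why $\pi(\leb(N_r(\gamma)))$ decays only like $1/\log(1/r)$ and that soup fails to be thin. Its outer boundary, on the other hand, has Hausdorff dimension $4/3$ by the Brownian disconnection exponent results. In quantitative form, the same disconnection estimates should give, for each fixed $z$ in the bounded window, a bound of the form $\pi(d(z,\gamma)\le r)\le C r^{2/3-\epsilon}$ for any $\epsilon>0$, with $C$ depending only on the window. Integrating over $z$ yields $\pi(\leb(N_r(\gamma)))\le C' r^{2/3-\epsilon}$, which gives the required $O(r^\alpha)$ with $\alpha=2/3-\epsilon$ and finishes the proof. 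This matches the informal ``$r^{2-d}$'' scaling with $d=4/3$ mentioned in the paper just before the statement.

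The main obstacle is precisely promoting the dimension fact ``$\dim_H = 4/3$'' into a genuine quantitative bound on the $\pi$-expected area of the $r$-neighborhood, which is a Minkowski-content-type quantity rather than a pure Hausdorff-dimension one. The cleanest route I would follow is to reduce to planar Brownian motion of bounded time-length (after integrating out the time parameter in the Brownian loop measure) and then to apply the standard two-sided disconnection estimate: the frontier of the loop meeting $\B(z,r)$ forces the underlying Brownian motion to perform a pair of non-disconnecting excursions between $\B(z,r)$ and a macroscopic circle, which occurs with polynomially small probability in $r$. Uniform control of the constants over $z\in\B(0,3)$ is routine, and once this estimate is in hand the remainder of the argument is just integration.
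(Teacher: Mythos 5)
Your overall strategy is the same as the paper's: verify the integral criterion of Lemma~\ref{mu_cross} by showing that the expected area of the $r$-neighborhood of the Brownian frontier decays like a positive power of $r$, the key mechanism being that a point near the frontier forces a non-disconnection event for the underlying Brownian path, which has polynomially small probability. The difference is in the input you use for that step: you invoke the sharp two-sided disconnection exponent (equivalently the dimension-$4/3$ result of Lawler--Schramm--Werner) to get a bound of order $r^{2/3-\eps}$, whereas the paper deliberately gives a self-contained argument that does not rely on the exponent computations: it only needs that the one-sided non-disconnection probability for a Brownian motion started at distance $r$ from a ball $\B(0,r)$ decays like \emph{some} positive power of $r$ (an elementary fact, proved by forcing the path to exit $\B(0,\sqrt r)$ and iterating a disconnection attempt on dyadic scales), combined with the boundedness of the Wiener-sausage area. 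So your route is valid but uses much heavier machinery than necessary; as you yourself note, any positive exponent suffices. Two points in your sketch deserve more care, and they are exactly where the paper spends its effort. First, the reduction from the loop (bridge) measure to free Brownian motion is not just ``integrating out the time parameter'': the Radon--Nikodym derivative of a full bridge with respect to free Brownian motion on the whole time interval is not bounded, so the paper re-roots the loop circularly, restricts attention to $Z[0,1/4]$ versus $Z[0,1/2]$, and uses that the law of $Z[0,1/2]$ has a \emph{bounded} density with respect to $B[0,1/2]$; some such device is needed before ``standard'' estimates for Brownian motion apply. Second, your pointwise-in-$z$ bound under the diameter-one normalized shape measure $\pi$ is not the measure under which the disconnection estimates are naturally stated; it is cleaner to take $\pi$ to be (a constant times) the law of the frontier of a time-length-one loop, bound the expected sausage area directly (as the paper does, via the strong Markov property at the hitting time of $\B(z,r)$, giving ``bounded sausage area times polynomially small non-disconnection probability''), and not insist on a uniformly bounded window. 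With these repairs your argument goes through; as written it is a correct plan whose main quantitative step is asserted by appeal to results the paper intentionally avoids using.
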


\begin {proof}
Since outer boundaries $\gamma$ of Brownian loops have dimension $4/3$ 
(see \cite {LSW2}),
the area of their $r$-neighborhoods decays typically at least like 
$r^{2/3+ o (1)}$ when $r \to 0$.
However, Lemma~\ref{mu_cross} requires looking at expectations, rather than
at ``typical'' behavior, so additional arguments are needed.
Here is a short self-contained proof that does not rely on \cite {LSW2}.

Consider the Brownian loop-measure, defined on
Brownian loops $(Z_t, t \in [0,1])$ 
of time-length $1$ that start and end at the origin. 
A point $z$ is in the $r$-neighborhood of the 
outer boundary of the loop iff it lies at distance at most $r$ of the loop, 
and the disc $\B (z,r)$ is not disconnected 
from $\infty$ by the loop.
 
Using circular re-rooting of the loop, the mean area of the $r$-neighborhood of the 
outer boundary is clearly bounded 
by four times the mean area of the set of points $z$ that lie at distance 
at most $r$ from $Z [0,1/4]$ and such that $\B(z,r)$ is not disconnected 
from infinity by $Z[0,1/2]$. 

It is easy to check that the law of $Z [0,1/2]$ is absolutely continuous with respect 
to that of $B [0,1/2]$, where $B$ is a standard Brownian motion, 
and that its Radon-Nikodym derivative is bounded. 
Hence, it is sufficient to bound the expected area of the set of points 
$z$ that lie at distance less than $r$ of $B [0,1/4]$ 
and such that $\B (z,r)$ is not disconnected from infinity by $B[0,1/2]$. 
Using the strong Markov property of $B$, it follows immediately that 
this quantity is bounded by the mean area $m$ of the $r$-neighborhood of $B[0,1/4]$ 
times the probability that a planar Brownian motion started at distance $r$ from the origin 
does not disconnect $\B(0,r)$ before time $1/4$. This last probability is bounded by $r$ 
to some positive power when $r \to 0+$ (see any introductory paper on disconnection exponents --
 this is due to the fact that one the one hand the probability that it stays in $\B (0, \sqrt {r})$
 during the time-interval $[0, 1/4]$ is very small, and on the other hand that the probability that 
it does not disconnect $\B (0,r)$ before reaching the circle of radius $\sqrt {r}$ decays at least 
like a positive power of $r$ when $r \to 0+$), 
and the former mean area $m$ is bounded. Hence, the thinness of the soup of 
outer boundaries of Brownian loops follows.   
\end {proof}

\subsection {Carpets and loop clusters} \label{carpet_cluster}

We will be interested in the fractal carpets defined using random soups. 
To simplify the discussion, we assume in this section that 
the measure $\pi$ (and therefore also $\mu$) is supported on {\bf simple} loops, 
that is, injective continuous maps from the unit circle into the plane. 
This will exclude the discrete stick soups 
(discussed briefly at the end of this section) 
and the Brownian loop soup (but we can define and recover the carpet for the latter
via soups of outer boundaries of Brownian loops).

The carpet corresponds to a connected component of the random Cantor set 
obtained by removing the interiors of all the loops. 
There are {\it a priori} various ways to define it. 

Consider a random soup $\Gamma_D$ in a simply connected domain $D$ (such that $D \not= \CC$),
consisting of loops $(\gamma_j, j \in J)$.
Each $\gamma_j$ defines an interior $O_j$ 
(the bounded connected component of $\R^2 \setminus \gamma_j$). 
The set $F={\overline D} \setminus \cup_j O_j$ is then a random closed subset of $D$ that we shall
sometimes refer to as the ``remaining set''. Our interest is in its connected components:

\begin{Definition}
The {\bf carpet} $G$ is the set of points $z$ in $D$ such that there exists 
a continuous path from any neighborhood of $z$ to $\partial D$ that stays in $F$. 
\end{Definition}

Loosely speaking, the carpet is the connected (by arcs) component of 
$F$ that has $\partial D$ as part of its boundary. Note that, for technical
reasons, we allow the connecting paths to intersect the loops in the soup
(but not their interiors). We will comment in a moment on whether this definition is equivalent to 
saying that there exists a continuous path from $z$ itself to $\partial D$ that stays in $F$.

\medbreak
Another almost equivalent approach is to look at {\bf clusters of loops}.
We say that two loops $\gamma$ and $\gamma'$ of a random soup are connected 
if one can find a finite sequence 
$\gamma_0= \gamma, \gamma_1, \ldots , \gamma_n = \gamma_n'$ 
in the soup such that for all $j \le n-1$, 
$$ \gamma_j \cap \gamma_{j-1} \not= \emptyset \hbox { and } O_j \cap O_{j-1} \not= \emptyset.$$
Clearly, connection between loops forms an equivalence relation, and
one can then define the clusters of the soup as the union of all $\gamma_j$'s 
for the loops in the same equivalence class. Note that for technical reasons,
we require here not only that 
the two curves $\gamma_j$ and $\gamma_{j-1}$ intersect but also that their interiors do. 
In many cases, this definition can be relaxed, as translation invariance 
can be easily be used to show that almost surely (for a sample of the soup), 
if two curves intersect then so do their interiors.
The clusters may be nested, this may occur for example when one loop lies
inside another one.

Given a loop-soup in the entire plane, one can wonder whether there exist 
clusters of infinite diameter. Kolmogorov's $0-1$ law implies in the standard way 
that this event has probability either 0 or 1, depending on the law of the soup.
Furthermore, if the measure $\mu$ is supported on loops with positive inner area,
it follows immediately that any disk $\B(0,R)$ is almost surely
contained in the interior of some loop $\gamma_j$.
Since this holds for arbitrarily large $R$, it follows that if there exists 
a cluster of infinite diameter, then it is unique. 
In this case, let $X$ denote the distance between the unbounded cluster and the origin. 
Clearly, $X$ is a scale-invariant finite real random variable, so that $X=0$ almost surely. 
It follows that for a loop soup in the entire plane:
\begin {itemize}
\item Either all clusters are bounded almost surely
\item Or there almost surely exists exactly one cluster, and this cluster is dense and unbounded.
\end {itemize}

Since our main interest in the present paper is the geometry of the clusters rather 
then the phase transition,
we will focus on the case when all clusters are bounded almost surely 
and the carpets are not empty. 
More precisely, our assumption on $\mu$ goes as follows:
 
\medbreak
\noindent
{\bf Subcriticality assumption 1.}
{\sl All clusters in the full-plane loop soup are almost surely bounded.}

\medbreak
In fact, we will assume an {\it a priori} slightly stronger condition:
\medbreak
\noindent
{\bf Subcriticality assumption 2.}
{\sl 
With positive probability, there exists a (random) closed loop $\ell$ in the plane 
that surrounds the origin and does not ``cross'' any loop of the loop-soup.}

\medbreak
In this definition, and throughout the rest of this paper, we say that two loops ``cross'' if each one of the two does intersect the interior of the other one.
Let us stress that the loop $\ell$ is not a loop of the loop-soup.
\medbreak

Note that a simple $0-1$ argument then implies that any given point is almost surely surrounded by infinitely many such loops (of arbitrarily small or large diameter) that cross no loop in the loop-soup. Let us now consider a domain $D$ as before and choose a given point $z$ on its boundary. The fact that there almost surely exist such small loops $\ell$ around $z$ that cross no loop of the full-plane loop-soup implies immediately that the carpet in $D$ is not empty (because the intersection of this small loop with $D$ is in the carpet). Hence, subcriticality implies that the carpets are almost surely non-empty. Note also that subcriticality clearly implies that the measure $\mu$ is thin. 

It could be interesting to study the converse i.e. whether non-triviality of the carpet 
and thinness imply our subcriticality assumption.
This is for instance not difficult if one assumes invariance of $\mu$ 
under certain rotations.
In that case, subcriticality can be proven using FKG-type arguments,
but this is not the purpose of the present paper. 
In section~\ref{sec_intensity} we review a coupling argument that shows
that most soups of interest are subcritical as long as their intensity
parameter lies below a certain value.

Furthermore, note that the arguments presented in \cite {ShW2} in order to prove that in the 
``subcritical phase,'' outer boundaries of clusters of Brownian loops are 
indeed loops (this is essentially only based on an FKG argument) can be easily generalized to the present setting. This implies that the subcriticality assumption 1 implies the subcriticality assumption 2 in most cases of interest (because the outer boundary $\ell$ of a loop-soup cluster that surrounds the origin is one closed loop that satisfies the conditions of the second assumption). It also indicates that in most cases, the carpet is indeed the set of points that are connected to $\partial D$ by a continuous path in $F$.

Finally, let us mention that our definition of the carpet can in fact be easily adapted to the case of the stick soups. One just needs to define the set $G$ as the set of points $z$
such that there exists a continuous path from any neighborhood of $z$ to $\partial D$ that does not ``cross'' any stick (in an appropriate sense) and to modify the definition of loop-clusters similarly. The results of the present paper would still apply.  
Other possible variants could include the possibilities that $\gamma_j$'s are discontinuous etc.

\section {Dimensions}

\subsection {Preliminaries}

In this section we will consider a thin translation- and scale- invariant random soup
on a bounded simply connected domain $D$ and we will suppose furthermore 
that the subcriticality assumption holds.
We will show that the Hausdorff dimension $\dim (G)$ of the carpet is deterministic, and that it is 
described by ``first moment estimates''.

We will use the standard second-moment method to evaluate the dimension
of a random fractal set C that is closely related to the carpet $G$. The idea
is to define a sequence of sets $C_\eps$ that converge to $C$,
to obtain probability estimates for these sets and to show that these
yield the dimension of $C$. We will use the following standard fact about first and second moments (see \cite{BeThese}
for this precise statement, or \cite {MP} for almost equivalent ones):

\begin{lemma}
\label {bef}
Let $D$ be a bounded domain and $( C_\eps, {\eps > 0})$ a  family of random Borel
subsets of $D$, so that $ C_\eps \subset C_{\eps'} $ if $\eps<\eps'$.
Define $C= \cap_{\eps >0} C_\eps$. 
Suppose that $\alpha >0$ and define the following three conditions: 
\begin {enumerate}
\item There exist positive constants
$k_1$ and $k_2$ such that 
for any small $\eps>0$ and $x \in D$, 
$k_1 \eps^\alpha \le \PP(x \in C_\eps) \le k_2 \eps^\alpha$.
\item
There exists a positive constant
$k_3$ such that 
for any $\eps>0$ and $x, y \in D$,
$\PP(x, y \in C_\eps) \le k_3  \eps^{2\alpha} |x-y|^{-\alpha}$.
\item
There exists  a positive constant
$k_4$ such that 
for all $\eps>0$ and $x \in D$, the expected area of $C_{\eps} \cap \B(x,\eps)$,
conditional on the event that $x \in C_\eps$,
is at least  $k_4 \eps^2$.
\end {enumerate}
Then:
\begin {itemize}
\item If $\alpha \le 2$ and both 1. and 3. hold, then $\dim (C) \le 2 - \alpha$ almost surely.
\item If $\alpha \le 2$ and both 1. and 2. hold, then $\dim (C) \ge 2 - \alpha$ with positive probability.
\item If $\alpha > 2$ and both 1. and 3. hold, then $C$ is almost surely empty.
\end {itemize} 
\end{lemma}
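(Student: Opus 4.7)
The three conclusions are classical consequences of the first- and second-moment method on a family of approximating sets. Under conditions 1 and 2 (second bullet), I would use Frostman's energy method on the random measure $\lambda_\eps := \mathbf 1_{C_\eps}(x)\,d^2x$ on $D$. Condition 1 gives $E[\lambda_\eps(D)]\asymp\eps^\alpha$, while condition 2 yields
\[
E[\lambda_\eps(D)^2] \;=\; \int_D\!\int_D P(x,y\in C_\eps)\,dx\,dy \;\le\; k_3\,\eps^{2\alpha}\int_D\!\int_D |x-y|^{-\alpha}\,dx\,dy \;\le\; C\eps^{2\alpha},
\]
the last integral being finite because $\alpha\le 2$. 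Paley--Zygmund then produces, uniformly in $\eps$, an event of probability at least some $c>0$ on which $\lambda_\eps(D)\ge c\eps^\alpha$. For any fixed $s<2-\alpha$ the analogous computation gives $E[I_s(\lambda_\eps)]\le C_s\,\eps^{2\alpha}$, using that $s+\alpha<2$ makes $|x-y|^{-(s+\alpha)}$ integrable on $D\times D$; a Markov bound then shows that with probability bounded below (uniformly in $\eps$) one has simultaneously $\lambda_\eps(D)\ge c\eps^\alpha$ and $I_s(\lambda_\eps)\le M_s\eps^{2\alpha}$. Normalising $\lambda_\eps$ by $\eps^\alpha$ and extracting a weak-$*$ subsequential limit yields, on an event of positive probability, a non-zero finite measure $\lambda$ supported on $C=\bigcap_\eps C_\eps$ with finite $s$-energy; Frostman's lemma then gives $\dim_H(C)\ge s$ on that event. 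Since the favourable probability lower bound is uniform in $s<2-\alpha$, the monotone convergence of the events $\{\dim_H(C)\ge s\}$ as $s\nearrow 2-\alpha$ yields $P(\dim_H(C)\ge 2-\alpha)>0$.

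For the upper bound and emptiness (first and third bullets, conditions 1 and 3), the strategy is to prove the single covering estimate
$$E[N(C_\eps,\eps)] \;\le\; K\eps^{\alpha-2},$$
where $N(\cdot,\eps)$ is the minimal number of radius-$\eps$ balls needed to cover the set. Granted this, when $\alpha\le 2$ a Markov bound plus Borel--Cantelli along $\eps=2^{-n}$ gives $\overline{\dim}_B(C)\le 2-\alpha$ a.s.\ and hence $\dim_H(C)\le 2-\alpha$ a.s.; when $\alpha>2$ the same estimate yields $P(C\ne\emptyset)\le P(N(C_\eps,\eps)\ge 1)\le E[N(C_\eps,\eps)]\to 0$, so $C=\emptyset$ a.s. To obtain the covering estimate, the idea is to turn the first-moment bound $E[\mathrm{Leb}(C_\eps)]\le K'\eps^\alpha$ from condition 1 into control of $N(C_\eps,\eps)$ by exploiting the expected $\eps$-thickness of $C_\eps$ supplied by condition 3: if $\{x_i\}$ is a maximal $2\eps$-separated subset of $C_\eps$, then the balls $B(x_i,4\eps)$ cover $C_\eps$ while the $B(x_i,\eps)$ are disjoint, so
$$\mathrm{Leb}(C_\eps)\;\ge\;\sum_i\mathrm{Leb}(C_\eps\cap B(x_i,\eps)),$$
and condition 3 pushes each summand to $\gtrsim\eps^2$ on average, giving $E[\#\{x_i\}]\,\eps^2\lesssim E[\mathrm{Leb}(C_\eps)]\lesssim \eps^\alpha$ as required.

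The delicate point, and the main obstacle, is bridging the \emph{deterministic}-$x$ conditional expectation of condition 3 with the sum over the \emph{random} packing centres $x_i(C_\eps)$ in the previous display. One clean route is to invoke the translation-invariance of the underlying soup, so that the conditional law of $C_\eps$ around a chosen ``centre'' is well-behaved in a Palm sense. A more hands-on alternative is to fix a deterministic $\eps$-grid $\{y_j\}\subset D$ of cardinality $O(\eps^{-2})$ and to prove the pointwise estimate
$$P(B(y_j,\eps)\cap C_\eps \ne \emptyset) \;\le\; C\eps^\alpha \quad \text{for each } y_j,$$
by combining condition 3 (via a reverse-Markov inequality, using the trivial a.s.\ upper bound $\mathrm{Leb}(C_\eps\cap B(y_j,2\eps))\le 4\pi\eps^2$) with the first-moment bound $E[\mathrm{Leb}(C_\eps\cap B(y_j,2\eps))]=O(\eps^{\alpha+2})$ coming from condition 1; summing over $j$ then gives the desired bound on $E[N(C_\eps,\eps)]$. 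Once this conversion is in place, the rest of the argument is routine Borel--Cantelli bookkeeping.
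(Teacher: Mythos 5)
The paper itself does not prove this lemma (it is quoted as a standard fact from \cite{BeThese} and \cite{MP}), so your argument has to stand on its own. Your lower-bound half (conditions 1 and 2) is the usual Frostman/energy second-moment argument and is essentially fine as sketched, with two points to make explicit: at $\alpha=2$ the integral $\int\int |x-y|^{-\alpha}$ you invoke diverges, so that boundary case must be treated separately (it is anyway degenerate for the Hausdorff dimension), and a weak-$*$ limit of the measures $\eps^{-\alpha}\mathbf 1_{C_\eps}\,d^2x$ is a priori carried by $\bigcap_\eps \overline{C_\eps}$ rather than by $C=\bigcap_\eps C_\eps$, so you need the $C_\eps$ closed (or an extra argument) to conclude that the limit measure sits on $C$.

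The genuine gap is exactly at the point you flag in the upper-bound/emptiness half, and the reverse-Markov patch does not close it. Condition 3 constrains the conditional law of $C_\eps$ given $\{x\in C_\eps\}$ for a \emph{fixed} $x$, while your grid estimate needs $\PP\bigl(\B(y_j,\eps)\cap C_\eps\neq\emptyset\bigr)\le K\eps^\alpha$, i.e.\ control conditionally on the uncountable union $\bigcup_{x\in\B(y_j,\eps)}\{x\in C_\eps\}$; the fixed-$x$ statements give no information about that union event. This is not a technicality: replace $C_\eps$ by $C_\eps\cup B$ where $B$ is an independent random compact set with $\PP(x\in B)=0$ for every fixed $x$ (for instance a uniformly translated copy of a self-similar set of zero Lebesgue measure and dimension $d>2-\alpha$). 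Every event $\{x\in B\}$ is null, so conditions 1, 2 and 3 persist with the same constants, yet $N(C_\eps\cup B,\eps)\ge N(B,\eps)\asymp \eps^{-d}\gg\eps^{\alpha-2}$ and $\dim\bigl(\bigcap_\eps(C_\eps\cup B)\bigr)\ge d>2-\alpha$. Hence the covering estimate $\EE[N(C_\eps,\eps)]\le K\eps^{\alpha-2}$ simply cannot be derived from conditions 1 and 3 alone, and no argument confined to those hypotheses will produce the first or third bullet; the statement has to be read with a stronger, pointwise form of thickness in mind. That stronger input is available (and is what is actually verified) in this paper's application: there one shows that $x\in C_{\eps/2}$ implies $\B(x,\eps/2)\subset C_\eps$, which immediately yields the hitting estimate $\PP(\B(y,\eps')\cap C_{\eps'}\neq\emptyset)\lesssim \eps'^{\alpha}$ at a comparable scale, and then your packing/grid computation and the Borel--Cantelli bookkeeping go through. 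Your alternative suggestion of a Palm/translation-invariance argument is not available either, since the lemma concerns an abstract family $(C_\eps)$ with no underlying invariant soup. As written, the first and third bullets are not established.
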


In other words, if we have  first and second moments estimates 
for the area of $C_\eps$ with the correct asymptotics, then we can
control the dimension of $C$. 

We will obtain such estimates for a random set $C$ related to the carpet, and then use a 0-1 law
to argue that the dimension is deterministic, and therefore
is almost surely equal to $2- \alpha$.
One natural choice for $C_\eps$
would be the set of all points within $\eps$ of the carpet;  however,
a slightly different definition is better suited in our setting:

\begin{Definition}
Let $C_\eps$ be the set of points $x \in D$ with
the property that there exists a path connecting $x$ to
$\partial D$ that does not cross any interior of a curve of the soup 
$\Gamma_{D \setminus \B(x,\eps)}$.
In other words, we take all curves in the soup, we ignore the ones that
get within distance $\eps$ of $x$, and we look if there exists a path 
connecting $x$ to the
boundary that does not hit the interior of any of the remaining curves.
We define the {\bf approximate carpet} $C = \cap_\eps C_\eps$. 
\end{Definition}

\begin {lemma}
Let $K = \cup_{\gamma \in \Gamma_D} \gamma$ be the union of all
curves in $\Gamma_D$. Then we have
$$ C \setminus K \subset G \subset C $$
\end {lemma}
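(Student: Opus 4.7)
My plan is to handle the two inclusions separately. The first, $G\subset C$, unfolds the definitions almost mechanically; the second, $C\setminus K\subset G$, requires a truncation argument that exploits both thinness and the hypothesis $z\notin K$.

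For $G\subset C$, fix $z\in G$ and $\eps>0$. Applying the carpet definition to the neighborhood $\B(z,\eps/2)$ produces a point $y$ at distance less than $\eps/2$ from $z$ together with a path $\eta$ from $y$ to $\partial D$ lying in $F$. I will concatenate $\eta$ with the segment $[z,y]$ and show that the result avoids every interior $O_j$ whose bounding curve $\gamma_j$ belongs to $\Gamma_{D\setminus\B(z,\eps)}$. The segment lies in $\B(z,\eps)$, which is disjoint from each such $\gamma_j$; by the Jordan curve theorem, $\B(z,\eps)$ is confined to one component of $\R^2\setminus\gamma_j$, and because $y\in F$ forces $y\notin O_j$, that component must be the unbounded one. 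This yields $z\in C_\eps$ for every $\eps$, hence $z\in C$.

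For $C\setminus K\subset G$, fix $z\in C\setminus K$ and a neighborhood $U$ of $z$. The idea is to take a path certifying $z\in C_\eps$ for a suitably small $\eps$ and show that, after a short initial portion, the path already lies in $F$. The key input is that thinness (Corollary~\ref{thin_large_R}) guarantees that only finitely many curves of $\Gamma_D$ have diameter at least $\delta$ for any fixed $\delta>0$; combined with $z\notin K$, these finitely many ``large'' curves sit at positive distance from $z$. Given $\delta$ with $\overline{\B(z,2\delta)}\subset U$, I then choose $\eps<\delta$ so small that every curve of $\Gamma_D$ meeting $\B(z,\eps)$ has diameter below $\delta$. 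Such a ``bad'' curve, together with the bounded component of its complement, is then trapped inside $\overline{\B(z,2\delta)}$. Consequently, outside $\overline{\B(z,2\delta)}$ only non-bad interiors can occur, and any $C_\eps$-certifying path $\eta$ from $z$ to $\partial D$ lies in $F$ on that region. Letting $t^*$ be the last time $\eta$ meets $\overline{\B(z,2\delta)}$ and setting $y=\eta(t^*)$ produces a point $y\in U$ and a tail of $\eta$ from $y$ to $\partial D$ lying in $F$, proving $z\in G$.

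The main obstacle is the second inclusion, and specifically the step that confines the interfering curves to a shrinking neighborhood of $z$: without both $z\notin K$ and thinness, large curves could accumulate at $z$ and prevent the $C_\eps$-path from escaping into $F$. A minor but necessary check is that the exit point $y\in\partial\B(z,2\delta)$ itself belongs to $F$. For a bad $O_j\subset\overline{\B(z,2\delta)}$, the openness of $O_j$ together with the fact that every neighborhood of $y$ meets the exterior of the ball forces $y\notin O_j$; for a non-bad $O_j$, $y=\eta(t^*)$ avoids $O_j$ by the defining property of $\eta$.
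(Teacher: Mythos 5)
Your proof is correct and follows essentially the same route as the paper: for $C\setminus K\subset G$, both arguments combine $z\notin K$ with the almost sure finiteness of large curves near $z$ (thinness/local finiteness) to choose $\eps$ so small that every curve ignored in the definition of $C_\eps$ is confined, together with its interior, to a small ball around $z$, and then use the tail of a $C_\eps$-certifying path beyond its last visit to that ball to witness the carpet condition. The only difference is cosmetic: you additionally spell out the inclusion $G\subset C$ (via the Jordan-curve/segment concatenation), which the paper dismisses as clear.
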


\begin {proof}
 Clearly, if a point $z$ is in the carpet $G$, then it is in $C$. 
Conversely, if a point is in $C$ and is not on any curve $\gamma_j$, 
then it necessarily is in the carpet.
Indeed, take $z$ in $D$ but not on any curve, and let $\eps>0$.
Because the soup is thin, only finitely many curves 
$\gamma_1, \ldots, \gamma_n$ in the soup intersect both
circles of radii $\eps/2$ and $\eps$ centered at $z$
(Corollary~\ref{thin_large_R}).
Hence for 
$$\delta = (\eps/2) \wedge \min_{1\le i \le n} d(z, \gamma_i),$$
no curve in the soup that intersects the circle $\cir(z,\eps)$
comes $\delta$-close to $z$.
If $z$ is in $C$, then by looking at $C_\delta$, 
we conclude there exists a path joining $\cir(z,\eps)$
to $\partial D$ that crosses no curve
in the soup. Since this is valid for all $\eps$,
we conclude that $z$ is in the carpet.
\end {proof}

\subsection {First moment estimates for the disk}

We first restrict ourselves to the case  where $D$ is the unit disk
$\U$. 
Let $\Gamma$ be the random soup in the unit disk.
For any $\eps \in (0, 1)$, let $\geps$ be the set of curves in $\Gamma$ that
are contained inside the ring $\{ \eps < |z| < 1 \}$
(including curves that wind around the inner circle
without touching it).
Let $A_\eps$ be the event that $\{0 \in C_\eps \}$ i.e. there
exists a path that connects the origin to the unit circle and does not
cross any curve in $\geps$.
We first  show  that $\pa{\eps}$ behaves asymptotically
like some power of $\eps$. 

\begin{lemma}\label{pa_upper} 
There exist $k > 0$  and $R>2$ such that for any $\eps, \eps' \in (0, 1/R)$,
\begin{equation}
k \pa{\eps} \pa{\eps'/R} \le \pa{\eps \eps'} \le \pa{\eps} \pa{\eps'}
\end{equation}
\end{lemma}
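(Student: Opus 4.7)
The plan is to prove both inequalities by splitting $A_{\eps\eps'}$ at the intermediate radius $\eps$, using two structural features of the soup: scale-invariance of $\mu$, and independence of the Poisson soup restricted to disjoint curve-families. For the upper bound, if $A_{\eps\eps'}$ is witnessed by a path $P$ from $0$ to $\cir(0,1)$ avoiding every curve contained in $\{\eps\eps' < |z| < 1\}$, then $P$ a fortiori avoids the sub-collection $\geps$, so $A_\eps$ holds; and the portion of $P$ inside $\B(0,\eps)$, rescaled by $1/\eps$, witnesses (by scale-invariance of $\mu$) an event $\tilde A_{\eps'}$ with the law of $A_{\eps'}$. These two events are measurable with respect to the soup restricted to the two disjoint curve-families $\{\gamma : \gamma \subset \{\eps < |z| < 1\}\}$ and $\{\gamma : \gamma \subset \{\eps\eps' < |z| < \eps\}\}$, and are therefore independent, giving $\pa{\eps\eps'} \le \pa{\eps}\pa{\eps'}$.

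For the lower bound I fix $R > 2$ large and use three events. The outer ingredient is $A_\eps$. The inner ingredient is the rescaled event $\tilde A_{\eps'/R}$, which by scale-invariance provides a path from $0$ to $\cir(0,\eps)$ in $\B(0,\eps)$ avoiding every curve contained in $\{\eps\eps'/R < |z| < \eps\}$ and has the same probability as $A_{\eps'/R}$; the shift from $\eps'$ to $\eps'/R$ is exactly the ``slack'' that will accommodate the gluing. The middle ingredient $M_\eps$ asserts the existence of a simple loop $\ell \subset \{\eps < |z| < R\eps\}$ surrounding the origin that crosses no loop of the soup. By subcriticality assumption~2 together with scale-invariance of $\mu$, $P(M_\eps) \ge k$ for some $k = k(R) > 0$ independent of $\eps$. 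On the intersection of the three events I plan to concatenate the inner path, a suitable portion of $\ell$, and the outer path into a single continuous path from $0$ to $\cir(0,1)$ that witnesses $A_{\eps\eps'}$.

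The main obstacle is arranging these three events to be simultaneously independent while ensuring the concatenated path avoids \emph{every} curve contained in $\{\eps\eps' < |z| < 1\}$. The curves not controlled by either $A_\eps$ or $\tilde A_{\eps'/R}$ are precisely those that straddle $\cir(0,\eps)$ or sit entirely inside the middle annulus, and these must be dealt with through $M_\eps$: once $\ell$ crosses no soup loop, each such loop lies wholly on one side of $\ell$, so that an inner loop obstructs only the inner part of the concatenated path (already routed in $\B(0,\eps)$) and an outer loop obstructs only the outer part, with $\ell$ itself providing the re-routing in between. The positive lower bound on $P(M_\eps)$ uses thinness (Lemma~\ref{mu_cross} and Corollary~\ref{thin_large_R}, which give that the $\mu$-mass of curves crossing the middle annulus equals $\mu(S_{1,R})$ and is finite) together with the subcriticality assumption, which produces $\ell$ as a positive-probability event inside this finite-mass family. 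The remaining work — the careful partition of the soup's curves into three curve-families supporting the three events genuinely disjointly, and the topological verification of the concatenation — is routine but requires some care.
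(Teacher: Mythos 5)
Your upper bound is correct and is exactly the paper's argument (decompose at radius $\eps$, scale invariance, independence of the soups supported on the two disjoint annuli). The lower bound, however, has genuine gaps. First, the gluing cannot work as you set it up: your inner event produces a path that stops on $\cir(0,\eps)$, while the loop $\ell$ of $M_\eps$ lies in the open annulus $\{\eps<|z|<R\eps\}$, so the two pieces need not meet, and the region between them is populated precisely by the curves that straddle $\cir(0,\eps)$, which neither of your crossing events controls. You place the factor-$R$ slack at the inner end (avoidance down to radius $\eps\eps'/R$), but that is the wrong end: in the paper the second event is a crossing of the ring $\{\eps\eps'<|z|<R\eps\}$ (probability $\pa{\eps'/R}$ by scaling), so that the inner crossing overshoots past the loop, which is moreover confined to the smaller ring $\{(4/3)\eps<|z|<(3/4)R\eps\}$; this is what forces the inner crossing, the loop, and the outer crossing to intersect.

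Second, your claim that once $\ell$ crosses no soup loop, ``each such loop lies wholly on one side of $\ell$'' is false: crossing is the symmetric notion (each curve meets the other's interior), so a soup loop straddling $\cir(0,\eps)$ may meet the interior of $\ell$ --- hence have points on both sides of $\ell$ --- without crossing it, provided $\ell$ avoids its interior; and the stronger event that no soup loop meets $\mathrm{int}(\ell)$ has probability zero, since almost surely infinitely many small soup loops lie in $\B(0,\eps)\subset \mathrm{int}(\ell)$. So $M_\eps$ alone does not neutralize the straddling curves; the paper needs an additional decreasing event (its $E_4$: no curve meeting $\{\eps<|z|<R\eps\}$ has diameter larger than $\eps/4$, positive probability by thinness and scale invariance) together with the margins $(4/3)\eps$ and $(3/4)R\eps$, so that every curve not handled by the two crossing events is small and its interior cannot reach the concatenated path. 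Third, the independence you hope to obtain by partitioning the soup into three disjoint curve families is not available: $\ell$ must avoid being crossed by curves from all three families, so $M_\eps$ (and any $E_4$-type event) is not supported on a family disjoint from the others, and restricting it to curves contained in the middle annulus would leave the straddling curves entirely uncontrolled. The paper's way out is to note that all the events are decreasing and to apply the FKG--Harris inequality, which yields $\PP(\cap_i E_i)\ge \prod_i \PP(E_i)$ and hence the constant $k$; your plan never invokes this. (Your positivity claim for $\PP(M_\eps)$ is essentially fine, modulo the small step, as in the paper, of first producing a loop in some fixed-ratio annulus with positive probability and then scaling.)
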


\begin{proof}
The upper bound is trivial: Let $\Gamma'$ be the set of curves in $\Gamma$ 
that are contained
inside the ring $\{ \eps\eps ' < |z| < \eps \}$.
Let $E$ be the event that there is a path that connects the circles
$\cir(0,\eps\eps')$ and $\cir(0,\eps)$ and does not cross any curve in
$\Gamma'$. By scale invariance of the soup, $\PP(E) = \pa{\eps'}$.
Clearly $A_{\eps\eps'} \subset E \cap A_\eps$.
Since $\Gamma'$ and $\geps$ are
disjoint, $E$ and $A_\eps$ are independent.
Hence
$\pa{\eps \eps'} \le \PP(E) \PP(A_\eps) = \pa{\eps} \pa{\eps'}$.

Note that either 
$\pa {\eps} > 0$ for all $\eps<1$, or $\pa {\eps}= 0$ for all $\eps$
smaller than some $\eps_0$.
It is easy to verify that in this second case, the carpet is almost surely empty. 
We can therefore assume we are in the first case.
Note also that if almost all loops for $\mu$ have nonempty interiors,
then with positive probability, one loop of the loop-soup will surround the origin, so
$\pa {\eps} < 1$ for $\eps$ small enough.

Let us define the event $B_R( r)$ that in the ring 
$\{z \ : \ r < |z| < Rr \}$, there exists a closed loop that 
surrounds the origin and that crosses no curve in the soup 
(we consider here the entire soup in the plane). 
Note that because of scale-invariance, the probability $b(R)= P ( B_R(r))$ 
does not depend on $r$. Furthermore, because soup clusters are bounded,
the subcriticality assumption implies that for sufficiently small $r$ and large $R$, 
$P (B_R (r)) >0$ (consider, for example, the boundary of the
cluster containing the origin).
Hence, $b(R)$ is positive for large enough $R$. 

Let us now fix $R > 2$ such that $b (R/2)>0$, and  
choose $\eps$ and $\eps'$ in $(0, 1/(2R))$.
Consider the following four events:
\begin {itemize}
\item
$E_1$ is the event that for the soup in the ring $\{z \ : \ \eps < |z | < 1 \}$, 
there exists a path joining the inner boundary to the outer boundary of the ring, 
that does not cross any loop of this soup.
The probability of this event is $\pa { \eps}$.
\item
$E_2$ is the event that for the soup in the ring 
$\{ z \ : \  \eps \eps' < |z | < R \eps \}$, 
there exists a path joining the inner and outer boundary of the ring, 
that does not cross any loop of this soup. 
By scale-invariance, the probability of this event is $\pa {\eps' /R}$.
\item
$E_3$ is the event that in the ring 
$\{ z \ : \ (4/3) \eps    < | z | < (3/4) R \eps \}$, 
there exists a closed loop surrounding the origin that 
does not cross any curve in the entire soup $\Gamma_\C$. 
Because of scale-invariance, the probability of this event 
does not depend on $\eps$ and is equal to $b = b(9R/16)$.
Note that because 
$$\eps \eps' < \eps < (4/3) \eps < (3/4) R \eps < R \eps < 1,$$
this closed loop must intersect the paths described 
in the definitions of $E_1$ and $E_2$.
\item
$E_4$ is the event that no curve in the soup $\Gamma_\C$ that 
intersects the ring    $\{ z \ : \ \eps < | z | < R\eps \}$ 
has diameter greater than $\eps /4$.
Because of scale-invariance, its probability $b'$ does not depend on $\eps$. 
It is positive because the soup is thin.

Recall that an event $A$ depending on the realization of a soup 
is said to be decreasing if 
$\Gamma \notin A$ and $ \Gamma \subset \Gamma'$ implies $\Gamma' \not\subset A$. 
It is standard that decreasing events are positively correlated
(this is the FKG-Harris inequality, see e.g. \cite {J}).
Here, the events $E_1$, $E_2$, $E_3$ and $E_4$ are all decreasing. 
Therefore,
$$ P ( E_1 \cap E_2 \cap E_3 \cap E_4 ) 
\ge \prod_{j=1}^4 P ( E_j) = b b' \pa {\eps} \pa {\eps' /R}.$$
On the other hand, $ E_1 \cap E_2 \cap E_3 \cap E_4 \subset A_{\eps \eps'} $.
Indeed, if all events occur,
then we can concatenate a part $\eta_2$ of the crossing defined by $E_2$ 
to a part $\eta_3$ of the loop defined by $E_3$ 
to a part $\eta_1$ of the crossing defined by $E_1$ 
to construct a crossing $\eta$ of the ring $\{z \ : \  \eps \eps' < |z| < 1 \}$.
Then $E_4$ implies that $\eta$ does not cross any curve in $\Gamma_{\eps\eps'}$.
\end {itemize}
The lower bound follows.
\end {proof}

Such a lemma implies classically up-to-constants estimates:

\begin{corollary}\label{pa_alpha_zero}
For some positive constants $\alpha$ and $k'$, we have
\begin{equation}
\eps^\alpha \le \PP (A_\eps) \le  k' \eps^\alpha. 
\end{equation}
\end{corollary}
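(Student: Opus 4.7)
The plan is a standard Fekete-style argument applied to $g(s) := -\log P(A_{e^{-s}})$. The upper bound of Lemma \ref{pa_upper} rewrites as the superadditivity $g(s+t) \ge g(s) + g(t)$ for $s, t > \log R$, so by continuous Fekete,
\[
\alpha := \sup_{s > \log R} \frac{g(s)}{s} = \lim_{s \to \infty} \frac{g(s)}{s} \in [0, \infty].
\]
Being realized as a supremum, $\alpha$ automatically satisfies $g(s) \le \alpha s$ for all $s > \log R$, which at once yields the lower bound $\eps^\alpha \le P(A_\eps)$ (with leading constant exactly one) for all $\eps \in (0, 1/R)$.

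For the matching upper bound, I would use the lower estimate of Lemma \ref{pa_upper}, which reads $g(s+t) \le g(s) + g(t + \log R) - \log k$ for $s, t > \log R$. Combined with monotonicity of $g$ (which follows from monotonicity of $P(A_\eps)$ in $\eps$, since decreasing $\eps$ enlarges $\geps$), and with a second application of the same inequality to control $g(s + \log R)$, this gives $g(2s) \le 2g(s) + D$ for some constant $D$. Iterating this against superadditivity produces $g(2^n s)/2^n \in [g(s), g(s)+D]$, and passing to the limit with $g(2^n s)/(2^n s) \to \alpha$ yields the sharp two-sided estimate $|g(s) - \alpha s| \le D$. In particular $\alpha < \infty$, and the matching upper bound $P(A_\eps) \le e^D \eps^\alpha$ holds on $(0, 1/R)$; both bounds are extended to $\eps \in [1/R, 1)$ by monotonicity and the trivial convergence $P(A_\eps) \to 1$ as $\eps \to 1$, possibly at the cost of enlarging $k'$.

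The only nontrivial step is strict positivity $\alpha > 0$, and this is precisely where the subcriticality assumption enters. As already observed in the proof of Lemma \ref{pa_upper}, if $\mu$-almost every loop has non-empty interior then with positive probability a single soup-loop surrounds the origin, forcing $A_{\eps_0}$ to fail for some $\eps_0 \in (0,1)$; whence $g(\log(1/\eps_0)) > 0$ and superadditivity gives $\alpha \ge g(\log(1/\eps_0))/\log(1/\eps_0) > 0$. In general one argues via the existence (with positive probability, by subcriticality together with scale invariance) of a bounded cluster of soup-loops surrounding the origin, which plays the same blocking role. Ruling out $\alpha = 0$ is the essential probabilistic input; everything else is routine Fekete bookkeeping.
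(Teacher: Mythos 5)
Your proof is, in substance, the same Fekete-type argument as the paper's: the paper applies subadditivity to $f(\eps)=\PP(A_\eps)$ and supermultiplicativity to the auxiliary function $\eps\mapsto k f(\eps/R)$ and identifies the two limits, whereas you extract the upper bound from the doubling inequality $g(2s)\le 2g(s)+D$ and the sandwich $g(2^n s)/2^n\in[g(s),g(s)+D]$, arriving at $|g(s)-\alpha s|\le D$ and hence $k'=e^D$; your treatment of $0<\alpha$ (via $\PP(A_{\eps_0})<1$ when a soup loop surrounds the origin) and of $\alpha<\infty$ matches the paper's remarks in the proof of Lemma \ref{pa_upper}. One housekeeping point: your constant $D$ involves $g(3\log R)$, so you implicitly assume $\PP(A_\eps)>0$ for all $\eps<1$; the paper justifies this by the dichotomy stated in the proof of Lemma \ref{pa_upper} (otherwise the carpet is a.s.\ empty), and you should invoke it explicitly.

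The one step that does not work as written is the extension of the constant-$1$ lower bound to $\eps\in[1/R,1)$ ``by monotonicity and the trivial convergence $\PP(A_\eps)\to 1$ as $\eps\to1$.'' That convergence is not trivial (it amounts to showing that chains of small loops do not block a very thin annulus), and even granted, it does not yield the pointwise inequality $\PP(A_\eps)\ge\eps^\alpha$ near $\eps=1$: both sides tend to $1$ and the rates matter, so no qualitative limit statement plus monotonicity can settle it. The clean fix is to note that the submultiplicative bound $\PP(A_{\eps\eps'})\le\PP(A_\eps)\PP(A_{\eps'})$ holds for all $\eps,\eps'\in(0,1)$ (its proof in Lemma \ref{pa_upper} never uses the restriction to $(0,1/R)$), whence $\PP(A_\eps)^n\ge\PP(A_{\eps^n})\ge\eps^{n\alpha}$ for large $n$ and so $\PP(A_\eps)\ge\eps^\alpha$ on all of $(0,1)$; the upper bound extends to $\eps$ bounded away from $0$ simply because $\PP(A_\eps)\le 1$, at the cost of enlarging $k'$. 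Your fallback remark that in general a bounded cluster surrounding the origin ``plays the same blocking role'' is also shakier than the single-loop argument (a path could a priori skirt along loop boundaries without entering any interior), but it is not needed here since $\pi$ is supported on simple loops, which have non-empty interiors.
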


\begin {proof}
Let $f(\eps) = \PP(A_\eps)$. Since $f( \eps \eps') \le f ( \eps) f( \eps')$, 
it follows readily from standard subadditivity that 
$$\lim_{\eps \to 0} (\log f( \eps) / \log \eps)= \inf_{\eps< 1/R} (\log f( \eps) / \log \eps) < \infty.$$
On the other hand, if we define $g (\eps) = k f ( \eps/R)$, we get that
$$ g ( \eps \eps') = k \pa{\eps \eps'/R} \ge k^2  \pa{\eps/R} \pa{\eps'/R} \ge g( \eps) g ( \eps')$$
so that 
$$\lim_{\eps \to 0} (\log g( \eps) / \log \eps)= \sup_{\eps< 1/R} (\log g( \eps) / \log \eps) >0.$$
But 
$$\lim_{\eps \to 0} (\log g( \eps) / \log \eps)=\lim_{\eps \to 0} (\log f( \eps) / \log \eps).$$
If we define this limit to be $\alpha$, it follows that $0 < \alpha < \infty$ and that for all $\eps$, 
$$ 
\eps^\alpha \le f( \eps) \le g(R \eps) / k \le (R \eps)^{\alpha} / k .$$
\end {proof}

\subsection {Second moment estimates for the disk}

We still assume that $D$ is the unit disk $\U$. To avoid boundary
effects, it is natural to define, for any $\delta \in (0,1)$, 
the set $D_\delta$ of points
in $D$ that are at distance greater than $\delta$ from $\partial D$ i.e. 
$D_ \delta= (1-\delta) \U$.

\begin{lemma}\label{pa_alpha}
For all small $\delta$: 
\begin {itemize}
\item
There exist positive $k_1=k_1 (\delta)$ and $k_2=k_2 (\delta)$ 
such that for all $x \in D_\delta$ and for all $\eps < \delta/4$,
\begin{equation}\label{moment1}
k_1 \eps^\alpha \le \PP(x \in C_\eps) \le k_2  \eps^\alpha.
\end{equation}
\item 
There exists $k_3=k_3 (\delta)>0$ such that 
for any two points $x, y \in D_\delta$,  and all $\eps < \delta/4$,
\begin{equation}\label{pa_alpha_2}
\PP(x \in C_\eps \hbox { and }  y \in C_\eps)
\le k_3 \frac { \eps^{2\alpha} }{ |x-y|^\alpha}.
\end{equation}
\item
There exists $k_4= k_4 (\delta)$ such that for all $x \in D_\delta$ 
and $\eps < \delta / 4$,
the expected area of $C_\eps \cap \B(x,\eps)$,
given that $x \in C_\eps$, is at least $k_4 \eps^2$.
\end {itemize}
\end{lemma}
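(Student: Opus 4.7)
The plan is to prove each of the three estimates by reducing to the unit-disk crossing probability $\pa{\cdot}$ of Corollary~\ref{pa_alpha_zero} via translation/scale invariance, combined with FKG inequalities whenever we need lower bounds or to glue local and global events. For the first-moment \emph{upper} bound, the event $\{x\in C_\eps\}$ implies that in the annulus $\{\eps<|z-x|<\delta\}$, the soup restricted to curves contained in that annulus admits a continuous crossing between the two boundary circles: any path witnessing $x\in C_\eps$ restricts to such a crossing. Translation and scale invariance identify this probability with $\pa{\eps/\delta}$, so Corollary~\ref{pa_alpha_zero} gives the $k_2\eps^\alpha$ bound.

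For the matching first-moment \emph{lower} bound, I would mimic the four-event FKG construction of Lemma~\ref{pa_upper}: (i)~a crossing of $\{\eps<|z-x|<\delta/2\}$ by the soup of curves contained in that annulus (probability $\gtrsim \eps^\alpha$ by Corollary~\ref{pa_alpha_zero}); (ii)~a closed loop in $\{\delta/2<|z-x|<\delta\}$ surrounding $x$ that is crossed by no loop of the full soup (positive probability by subcriticality assumption~2 and scale invariance); (iii)~the thinness event that no curve of the full soup meeting the inner annulus has diameter $\ge \eps/8$ (positive probability by Corollary~\ref{thin_large_R}); and (iv)~a ``bridging'' event that chains $O(\log(1/\delta))$ subcritical surrounding-loop events at dyadic scales between $\B(x,\delta)$ and $\partial D$, ensuring that the loop produced by~(ii) is connected in $F$ to $\partial D$. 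All four events are decreasing in the soup, so FKG gives probability $\gtrsim \eps^\alpha$, and the same geometric argument as in Lemma~\ref{pa_upper} shows $E_1\cap E_2\cap E_3\cap E_4 \subset \{x\in C_\eps\}$.

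For the second moment, the case $|x-y|\ge \delta/4$ follows trivially from the first-moment upper bound and the boundedness of $D$. For $|x-y|<\delta/4$, set $r=|x-y|/3$ and $m=(x+y)/2$; the three annuli $\{\eps<|z-x|<r\}$, $\{\eps<|z-y|<r\}$ and $\{2|x-y|<|z-m|<\delta/2\}$ are pairwise disjoint, and $\{x,y\in C_\eps\}$ implies a crossing of each of them by the corresponding restricted soup. These three crossings depend on disjoint parts of the Poisson process and are therefore independent, so Corollary~\ref{pa_alpha_zero} gives the product bound $k'(\eps/r)^\alpha \cdot k'(\eps/r)^\alpha \cdot k'(4|x-y|/\delta)^\alpha \le k_3 \eps^{2\alpha}/|x-y|^\alpha$.

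For the area estimate, it suffices to prove $\PP(x\in C_\eps,\ y\in C_\eps)\ge c_0 \eps^\alpha$ for every $y\in\B(x,\eps/2)$, since integrating this over $y$ and dividing by $\PP(x\in C_\eps)\asymp \eps^\alpha$ yields $k_4\eps^2$. I would obtain this from the same FKG construction as in the first-moment lower bound, with the thinness event strengthened so that no soup loop meeting $\B(x,3\eps/2)$ has diameter $\ge \eps/8$: then the local crossing can be completed both from $x$ and from any $y\in\B(x,\eps/2)$ to $\partial\B(x,3\eps/2)$ avoiding interiors of $\Gamma_{D\setminus \B(x,\eps)}$ and of $\Gamma_{D\setminus \B(y,\eps)}$ respectively, and hence to $\partial D$. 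The main technical obstacle throughout is step~(iv) of the first-moment lower bound: producing a decreasing, positive-probability event that ``ferries'' the subcritical surrounding loop from scale $\delta$ out to $\partial D$, uniformly in $x\in D_\delta$, while preserving FKG monotonicity across the dyadic scales involved.
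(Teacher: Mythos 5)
The central problem is your first-moment lower bound, and through it your area estimate: both hinge on the ``bridging'' event~(iv), which you never construct and which, as sketched, does not work. Two disjoint loops surrounding $x$ are necessarily nested, so a chain of surrounding-loop events at dyadic scales produces no connection between consecutive scales; you would also need radial crossings at each scale that avoid not only the curves contained in each dyadic annulus but also the curves straddling its boundary circles, which requires additional thinness events and a genuine gluing argument, none of which is supplied. More importantly, the whole difficulty is an artifact of aiming at a stronger event than $\{x \in C_\eps\}$. By definition, only curves of the soup contained in $D \setminus \B(x,\eps)$ are relevant, and since $x \in D$ and $D \subset \B(x,2)$, every such curve lies in the annulus $\B(x,2) \setminus \B(x,\eps)$. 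Hence the event that there exists a crossing from $\cir(x,\eps)$ to $\cir(x,2)$ which does not cross any curve contained in that annulus already implies $x \in C_\eps$ (truncate the crossing at its first hit of $\partial D$), and by translation and scale invariance its probability is $\pa{\eps/2} \ge (\eps/2)^\alpha$ by Corollary~\ref{pa_alpha_zero}. This is exactly how the paper argues: no FKG, no subcriticality loops, no connection ``in $F$'' out to $\partial D$ is needed. The same observation collapses your third bullet: since $x \in C_{\eps/2}$ implies $\B(x,\eps/2) \subset C_\eps$ (for $|y-x| < \eps/2$ the curves avoiding $\B(y,\eps)$ form a subset of those avoiding $\B(x,\eps/2)$), the conditional area bound follows in two lines from the first-moment estimates, with no two-point lower bound and no FKG construction required.

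There is also a genuine error in your second moment: the case $|x-y| \ge \delta/4$ does \emph{not} follow ``trivially from the first-moment upper bound''; that bound only gives $\PP(x,y \in C_\eps) \le k_2 \eps^\alpha$, whereas the required estimate is of order $\eps^{2\alpha}$ up to a $\delta$-dependent constant, since $|x-y|$ is bounded above. As in your own small-separation case (and in the paper), you need two independent crossing events in disjoint annuli around $x$ and $y$ (say of outer radius $\delta/8$), yielding a bound of the form $(k')^2 (8\eps/\delta)^{2\alpha}$. You should also dispose of the regime $\eps \ge |x-y|/2$ separately (there the first moment alone suffices, and your three annuli need not be well defined or disjoint from $\B(x,\eps)$). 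With these repairs your second-moment argument coincides with the paper's; the upper half of the first moment is fine as written.
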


\begin{proof}

We first estimate $\PP ( x \in C_\eps)$.
Assume that $\eps < \delta / 2$. 
For any $r$ define $\Gamma_\eps^r (x)$ to be the set of all curves in $\Gamma$
contained in the ring $\B(x,r) \setminus \B(x, \eps)$. We have

$$ \B(x, \eps) \subset \B(x,\delta) \subset D \subset \B(x,2). $$

Hence if $x \in C_\eps$, then there exists a path from $\cir(x,\eps)$ to 
$\cir(x,\delta)$
that does not cross any curve in $\Gamma_\eps^\delta(x)$. Conversely,
if there exists a path from $\cir(x,\eps)$ 
to $\cir (x,2)$ that does not cross any
curve in $\Gamma_\eps^2 (x)$, then $x \in C_\eps$.
Hence both the upper and lower bounds follow from 
 Corollary~\ref{pa_alpha_zero} and scale invariance.

We now estimate the second moment. 
We can assume without loss of generality that $\eps < |x-y|/2$
(otherwise the result follows from the first moment estimate).
Consider first the case when $|x-y| \ge \delta / 2$. Define the events
\begin{itemize}
\item $E_1$: there is a path from $x$ to $\cir(x,\delta/4)$ that does not
cross any curve in $\Gamma$ contained in $\B(x,\delta/4) \setminus \B(x,\eps)$
\item $E_2$: there is a path from $y$ to $\cir(y,\delta/4)$ that does not
cross any curve in $\Gamma$ contained in $\B(y,\delta/4) \setminus \B(y,\eps)$
\end{itemize}
The balls $\B(x,\delta/4)$ and $\B(y,\delta/4)$ do not intersect, 
so $E_1$ and $E_2$ are independent. Clearly 
$\{x, y \in C_\eps\} \subset E_1 \cap E_2$,
so by scale invariance
\begin{equation}
\PP(x, y \in C_\eps) \le \PP(E_1)\PP(E_2) \le 
(4\eps/\delta)^{2\alpha} (k')^2
\end{equation}
and since $|x-y|$ is bounded below, (\ref{pa_alpha_2}) follows.

When $\beta = |x-y| \le \delta / 2$, let $z = (x+y) / 2$
and consider the events 
\begin{itemize}
\item $E_1$: there is a path $\pi_1$ from $x$ to $\cir(x,\beta/2)$ that does not
cross any curve in $\Gamma$ contained in $\B(x,\beta/2) \setminus \B(x,\eps)$
\item $E_2$: there is a path $\pi_2$ from $y$ to $\cir(y,\beta/2)$ that does not
cross any curve in $\Gamma$ contained in $\B(y,\beta/2) \setminus \B(y,\eps)$
\item $E_3$: there is a path $\pi_3$ from $\cir(z,2 \beta )$ to $\cir(z,\delta)$ 
that does not cross any curve in $\Gamma$ contained in $\B(z,\delta) \setminus \B(z,2 \beta)$
\end{itemize}
The three events involve curves contained in disjoint sets,
so they are independent. Clearly $\{x, y \in C_\eps \} 
\subset E_1 \cap E_2 \cap E_3$, and by Corollary~\ref{pa_alpha_zero},
$$ \PP(E_1) \PP(E_2) \PP(E_3) \le (k')^3 (2\eps / \beta)^\alpha 
(2\eps / \beta)^\alpha (2\beta / \delta)^\alpha $$
and (\ref{pa_alpha_2}) follows.

It now remains to check the final statement.
We show first that if $x \in C_{\eps/2}$, then $\B(x, \eps/2) \subset C_\eps$. 
Indeed, if $|y-x| < \eps/2$, then
the soup obtained by removing all curves that are at distance less 
than $\eps$ from $y$ is contained in the soup obtained by removing 
all curves that are at distance less than $\eps/2$ of $x$. 
Hence $y \in C_\eps$. 
Note also that since $D$ and $D_\delta$ are disks, we have that
for any $x \in D_\delta$ and $\eps < \delta / 4$, 
the area of the intersection 
$\B(x, \eps/2) \cap D_\delta$ is at least  $\eps^2/8$. 
Hence, 
\begin {eqnarray*}
\EE ( 1_{ x \in C_\eps } {\cal A} ( C_ \eps \cap D_\delta \cap \B(x, \eps ) ) ) 
& \ge & 
\EE ( 1_{x \in C_{\eps/2 }} {\cal A} ( C_{\eps} \cap D_\delta \cap \B(x, \eps/2))) \\
& \ge &
\EE ( 1_{x \in C_{\eps /2}} \eps^2 / 8 ) \\
& \ge & 
\PP (x \in C_{\eps /2}) \eps^2 / 8 \\
& \ge &
k_1 \eps^{\alpha + 2} / 2^{\alpha +3} 
\\
& \ge & 
(k_1 / k_2 2^{\alpha+3} ) \eps^2 \PP ( x \in C_ \eps ) 
. \end {eqnarray*}
This is precisely the last statement of the lemma.
\end{proof}

We can now apply Lemma~\ref{bef}: it shows that for any small $\delta$, 
the dimension of $C \cap D_\delta$ is almost surely not larger than $2 - \alpha$,
and is equal to $2-\alpha$ with nonzero probability.
We conclude that almost surely,
$$ \dim (C) = \dim \left( \cup_\delta (C \cap D_\delta) \right) = 
\sup_\delta  \left( \dim (C \cap D_\delta)  \right) \le 2-\alpha $$
and that 
$P ( \dim (C)  = 2 - \alpha ) > 0$.

\subsection {A 0-1 law for general domains $D$}

We now assume that $D$ is a bounded non-empty open domain.

The proof of the upper bound readily follows for the previous case.
Indeed, if $z$ belongs to the carpet defined by the soup in $D$,
and $B$ is any small disk with $z \in B \subset D$, then $z$ also
belongs to the approximate carpet defined by the soup in $B$.
Hence if we write $D = \cup B_i$ as a countable union of open disks,
the approximate carpet $C$ defined by the soup in $D$ will be a subset of the union
$ \cup C_i$ of the approximate carpets defined inside each ball.

For the lower bound on the dimension, consider a sequence $z_n$ of points in $D$ such that
\begin {itemize}
\item The sequence $u_n= d(z_n, \partial D)$ converges to $0$ as $n \to \infty$.
\item The disks $\B (z_n, \sqrt {u_n}), n \ge 0$ are disjoint.
\end {itemize}
It is easy to find such a sequence: Consider a for instance a sequence of points $x_n$ on $\partial D$ and a positive sequence (for instance $v_n= c 2^{-n}$)  in such a way that all the balls
$\B (x_1, v_1)$, \ldots $\B(x_n , v_n)$ etc. are disjoint. Then, one just has to choose for each $n$, a point $z_n$ in $\B (x_n, v_n^2/4) \cap D$.
 
Fix any $R>1$.
Let $F_n$ be the event that there exist curves in $\Gamma_\C$ that intersect both 
$\cir (z_n, Ru_n)$ and $\cir (z_n, \sqrt {u_n})$. 
From Corollary~\ref{thin_large_R} and scale invariance, 
$\PP(F_n) \rightarrow 0$ as $n \rightarrow \infty$,
so by passing to a subsequence we can assume (using a standard
Borel-Cantelli argument) that almost surely, no $F_n$ occurs for large enough $n$.

Now for each $n$, define the soups 
$\Gamma_n = \Gamma_{\B(z_n, 2Ru_n)}$ and  
$\Gamma_n'= \Gamma_{\B(z_n, \sqrt {u_n})}$, and consider the following events:
\begin {itemize}

\item $E_1 (n)$ is the event that for the approximate carpet $C_n$ defined by
the soup $\Gamma_n$, the intersection $C_n \cap \B (z_n, u_n)$ 
has dimension at least $2 - \alpha$. We have showed in the previous section
that it has probability bounded below by some positive constant.

\item $E_2 (n)$ is the event that
there exists a closed loop $\eta_2$ in the ring 
$\{z \ : \ u_n < | z - z_n | < Ru_n \}$ 
that does not cross any curve in $\Gamma_n$.

\item $E_3 (n)$ is the event that there exists no curve 
in $\Gamma_n'$ that intersects both $\cir (z_n,Ru_n)$ and $\cir (z_n, 2Ru_n)$.
\end {itemize}
Since the soup is thin and subcritical, $E_2 (n)$ and $E_3 (n)$ also have probabilities
bounded from below.
The three events are decreasing and therefore positively correlated. 
Hence, the probability of their intersection is bounded below 
independently from $n$. 
Since the disks $\B(z_n , \sqrt {u_n})$ are disjoint, 
it follows that the corresponding soups are independent. 
By Borel-Cantelli, there almost surely exist an infinite set of $n$'s such 
$E_1 (n) \cap E_2 (n) \cap E_3 (n)$ occurs. 

Hence there exists $n$ such that all $E_i(n)$ occur and $F_n$ does not.
Let $z \in C_n \cap \B (z_n, u_n)$, so there is a path $\eta_1$
from $z$ to $\cir (z_n, 2R u_n)$ that does not cross $\Gamma_n$.
The loop $\eta_2$ cannot lie entirely inside $D$, so we can concatenate
parts of $\eta_1$ and $\eta_2$
to construct a path $\eta$ that connects $z$ to $\partial D$, lies
inside $\B(z_n, R u_n)$, and crosses no curve in $\Gamma_n$.
Then $E_3(n)$ guarantees $\eta$ crosses no curve in $\Gamma_n'$,
and finally, since $F_n$ does not occur, $\eta$ crosses no curve in $\Gamma$.
Hence $z \in C$, so the approximate carpet has almost surely
dimension at least
$2 - \alpha$.

We have therefore completed the proof of the following fact:

\begin {proposition}
Under our subcriticality assumption, 
the dimension of the approximate carpet $C$ defined by 
$\Gamma_D$ is almost surely equal to $2- \alpha$. 
\end {proposition}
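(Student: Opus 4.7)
The plan is to establish $\dim(C)\le 2-\alpha$ and $\dim(C)\ge 2-\alpha$ separately, in each case reducing to the disk case already handled via Lemma~\ref{pa_alpha} and Lemma~\ref{bef}. For the upper bound, I would write $D$ as a countable union of open disks $B_i \subset D$ and observe that if $z \in C \cap B_i$, then any path from $z$ to $\partial D$ avoiding $\Gamma_{D\setminus \B(z,\eps)}$ can be truncated at its first exit from $B_i$ to give a path from $z$ to $\partial B_i$ avoiding $\Gamma_{B_i\setminus \B(z,\eps)}\subset \Gamma_{D\setminus \B(z,\eps)}$. Hence $z$ lies in the approximate carpet of $\Gamma_{B_i}$ as well, and the disk result together with countable stability of Hausdorff dimension yields $\dim(C)\le 2-\alpha$ almost surely.

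For the lower bound I would pick a sequence $z_n \in D$ with $u_n := d(z_n,\partial D) \to 0$ chosen so that the buffer balls $\B(z_n,\sqrt{u_n})$ are pairwise disjoint; this is arranged by taking $z_n$ very close to distinct boundary points $x_n \in \partial D$ around which one has already chosen disjoint balls. Fix a large $R>1$ and let $\Gamma_n$ denote the soup inside $\B(z_n,2Ru_n)$ and $\Gamma_n'$ the soup inside $\B(z_n,\sqrt{u_n})$. I would consider three events: $E_1(n)$, that the approximate carpet of $\Gamma_n$ intersected with $\B(z_n,u_n)$ has dimension at least $2-\alpha$; $E_2(n)$, that some closed loop in the annulus $\{u_n<|z-z_n|<Ru_n\}$ crosses no curve of $\Gamma_n$; and $E_3(n)$, that no curve of $\Gamma_n'$ meets both $\cir(z_n,Ru_n)$ and $\cir(z_n,2Ru_n)$. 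All three are decreasing in the soup, and by scale invariance their probabilities are constants $>0$ in $n$: the first from the disk case, the second from subcriticality, the third from thinness via Corollary~\ref{thin_large_R}. FKG then gives a uniform positive lower bound on $\PP(E_1(n)\cap E_2(n)\cap E_3(n))$, and disjointness of the buffer balls makes these triples independent across $n$, so Borel--Cantelli yields infinitely many $n$ at which all three occur almost surely.

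To rule out interference from long curves originating outside the buffer ball, let $F_n$ be the event that some curve of the full-plane soup $\Gamma_\C$ meets both $\cir(z_n,Ru_n)$ and $\cir(z_n,\sqrt{u_n})$; Corollary~\ref{thin_large_R} and scale invariance give $\PP(F_n)\to 0$, and a further Borel--Cantelli along a subsequence guarantees $F_n^c$ eventually. Picking an $n$ for which all good events hold and a point $z \in \B(z_n, u_n)$ witnessing the local lower bound from $E_1(n)$, I would concatenate the local escape path with an arc of the sealing loop from $E_2(n)$ and a continuation outside the buffer ball. Since the loop $\eta_2$ has radius exceeding $u_n=d(z_n,\partial D)$, some arc of it exits $D$, while $E_3(n)$ and $F_n^c$ together certify that no curve of $\Gamma_D$ intersects this concatenated path. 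Hence $z \in C$, and almost surely $C$ contains a set of dimension at least $2-\alpha$.

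The main obstacle is precisely this final gluing step: turning a local approximate-carpet witness inside a tiny buffer ball into a genuine escape route to $\partial D$ avoiding every curve of $\Gamma_D$. The triple $(E_2(n),E_3(n),F_n^c)$ is engineered exactly for this purpose, and the uniform-in-$n$ lower bounds on their probabilities lean on both subcriticality (to produce the sealing loop) and thinness (to suppress long connecting curves). A pleasant byproduct of running the argument on infinitely many disjoint and therefore independent buffer balls is that the Borel--Cantelli step directly upgrades the lower bound from positive probability to almost sure, absorbing the role that a formal 0-1 law would otherwise play.
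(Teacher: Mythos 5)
Your proposal is correct and follows essentially the same route as the paper: the upper bound via a countable cover of $D$ by disks and monotonicity of the approximate carpet, and the lower bound via boundary-approaching points $z_n$ with disjoint buffer balls $\B(z_n,\sqrt{u_n})$, the same three decreasing events handled by FKG, independence across $n$, Borel--Cantelli together with the vanishing-probability events $F_n$, and the same concatenation of the local escape path with the sealing loop. The only cosmetic difference is that you make explicit the truncation argument for the upper bound and the remark that Borel--Cantelli over independent buffer balls replaces a separate 0-1 law, both of which the paper leaves implicit.
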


Finally, we consider the dimension of the carpet $G$ itself:

\begin{Proposition}
Under our subcriticality assumption, 
the dimension of the carpet $G$ defined by 
$\Gamma_D$ is almost surely equal to $2- \alpha$. 
\end{Proposition}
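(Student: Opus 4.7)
The approach is very natural. The upper bound $\dim(G) \le 2-\alpha$ almost surely is immediate from $G \subset C$ and the preceding proposition, so everything reduces to proving $\dim(C \setminus K) \ge 2-\alpha$ almost surely, where $K = \bigcup_{j \in J_D} \gamma_j$ is the union of all curves in the soup $\Gamma_D$; this uses the inclusion $C \setminus K \subset G$ recorded in the lemma just after the definition of the carpet. The key fact driving this reduction is the remark following Lemma~\ref{mu_cross}: since the soup is thin, a fixed point of the plane almost surely lies on no curve of the soup, so $\PP(x \in K) = 0$ for every fixed $x \in D$, and by Fubini the expected area of $K \cap B$ vanishes for every bounded Borel set $B$.

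The plan is then to repeat the proof of the preceding proposition with $C_\eps$ replaced throughout by $\tilde C_\eps := C_\eps \setminus K$. This is still a decreasing family of Borel sets whose intersection is $C \setminus K$, and the three moment estimates of Lemma~\ref{pa_alpha} transfer to $\tilde C_\eps$ with the same constants: the first-moment identity $\PP(x \in \tilde C_\eps) = \PP(x \in C_\eps)$ uses $\PP(x \in K) = 0$; the second-moment bound $\PP(x,y \in \tilde C_\eps) \le \PP(x,y \in C_\eps)$ is automatic; and the expected-area condition transfers because the expected area of $C_\eps \cap K \cap \B(x,\eps)$ is dominated by $\EE({\cal A}(K \cap \B(x,\eps))) = 0$. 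Applying Lemma~\ref{bef} as in the disk case then yields $\dim((C \setminus K) \cap D_\delta) \ge 2-\alpha$ with positive probability.

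For a general domain $D$, the Borel--Cantelli construction from the preceding proof (using disjoint balls $\B(z_n, \sqrt{u_n})$ accumulating on $\partial D$) upgrades this to an almost sure statement, after replacing the event $E_1(n)$ by its analog involving $(C_n \setminus K_n) \cap \B(z_n, u_n)$. In the path-concatenation step, any $z \in (C_n \setminus K_n) \cap \B(z_n, u_n)$ still satisfies $z \in C$ by the original argument, and in addition $z \notin K$: any hypothetical curve $\gamma \in \Gamma_D \setminus \Gamma_n$ containing $z$ would be connected and would extend from $\B(z_n, u_n)$ to outside $\B(z_n, 2Ru_n)$, so by connectedness it would cross $\cir(z_n, Ru_n)$ and either fit inside $\B(z_n, \sqrt{u_n})$ (contradicting $E_3(n)$) or exit it (contradicting the absence of $F_n$). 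Hence $z \in C \setminus K$, and Borel--Cantelli delivers $\dim(C \setminus K) \ge 2-\alpha$ almost surely.

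Once the single input $\PP(x \in K) = 0$ is in hand, the rest is essentially bookkeeping. The only point requiring an independent (but short) verification is the last one --- namely, that a point chosen in $(C_n \setminus K_n) \cap \B(z_n, u_n)$ is not accidentally on some large curve of $\Gamma_D$ that escaped the truncated soup $\Gamma_n$ --- and this is the clean connectedness argument indicated above. I do not foresee any other obstacle.
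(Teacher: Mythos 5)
Your proposal is correct and takes essentially the same route as the paper: pass to $C_\eps' = C_\eps \setminus K$, use thinness to get $\PP(z \in K)=0$ so that all first- and second-moment (and expected-area) estimates transfer, and then rerun the $0$--$1$ law argument to upgrade to an almost sure statement. The connectedness verification you add at the end (that a point of $(C_n\setminus K_n)\cap \B(z_n,u_n)$ is not on any large curve outside the truncated soup) is exactly the detail the paper leaves implicit when it says the $0$--$1$ law proof "holds essentially unchanged," and you carry it out correctly.
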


\begin{proof}
Recall that $C \setminus K \subset G \subset C$, where $K$ is the union of all the loops in the loop-soup.
Recall also that subcriticality implies that the soup is thin, which ensures that for any given point $z$, 
$P ( z \in K ) = 0$.

For each $\eps > 0$, we can define the set $C_\eps' = C_\eps \setminus K$. 
Note that  $C \setminus K = \cap_{\eps > 0 } (C_\eps \setminus K ) = \cap_{\eps > 0} C_\eps'$. 
Furthermore, for any given $z$, $P ( z \in C_\eps) = P ( z \in C_\eps')$. 
It follows easily that all first and second moment estimates that we derived for $C_\eps$ 
also hold for $C_\eps'$, so that $\dim (C \setminus K) = 2-\alpha$ with positive probability.
The proof of the 0-1 law also holds essentially unchanged
and we conclude that
the Hausdorff dimension of $G$ is almost surely equal to $2- \alpha$.
\end{proof}

\section {Approximating low-density carpets}

\subsection {The ``remaining set''} 

\label {remaining}

Consider a subcritical thin loop-soup in a bounded domain $D$ as before 
with intensity $\mu$. Recall $O_j$ are the interiors of the loops in the soup,
and define $F= D \setminus \cup_j O_j$. 
$F$ is what we informally call ``the remaining set''. 
We emphasize that {\it a priori} (and in reality) its dimension  
should be larger than that of the approximate carpet. Indeed, a typical 
point in $F$ will be surrounded by infinitely many chains of loops,
and therefore not in the approximate carpet.

\begin{lemma}\label{coeff_c}\cite {Th}
Recall the definition $\beta = \beta (\mu)= \beta (\pi) = \pi( {\cal A}(\gamma) )$,
the ``expected'' area surrounded by $\gamma$. Then 
$\dim(F) = \max (0, 2 - \beta)$.
\end{lemma}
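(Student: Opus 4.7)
The plan is to apply Lemma~\ref{bef} to the natural monotone approximation
$$ F_\eps = \{ z \in D \ :\ \text{no loop } \gamma_j \in \Gamma_D \text{ with } |\gamma_j| \ge \eps \text{ has } z \in O_j \}, $$
which is decreasing in $\eps$ and satisfies $F = \bigcap_{\eps > 0} F_\eps$ (since every loop has positive diameter).  Because $z \in F_\eps$ iff the Poisson count of loops in $\Gamma_D$ of diameter at least $\eps$ whose interior contains $z$ vanishes, $\PP (z \in F_\eps) = \exp(-\lambda_\eps (z))$ with $\lambda_\eps(z) := \mu(\{\gamma \subset D : z \in \inside{\gamma}, |\gamma| \ge \eps\})$, and the whole argument reduces to estimating $\lambda_\eps$ and its two-point analogue.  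I will take the exponent in Lemma~\ref{bef} to be $\alpha = \beta$.

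\textbf{First moment.}  Using Lemma~\ref{l1} with $\pi$ supported on curves of diameter one, a typical loop is $\rho(z_0 + \gamma_0)$ and the change of variable $a = z/\rho - z_0$ turns $z \in \inside{\rho(z_0 + \gamma_0)}$ into $a \in \inside{\gamma_0}$, giving $\int d^2 z_0\, \mathbf{1}_{z \in \inside{\rho(z_0 + \gamma_0)}} = {\cal A}(\gamma_0)$.  For $z \in D_\delta$ and $\eps < \delta$, any loop with $|\gamma| < \delta$ and $z \in \inside{\gamma}$ is automatically contained in $\B(z,\delta) \subset D$, so the contribution of diameters in $[\eps, \delta)$ to $\lambda_\eps(z)$ is exactly $\int_\eps^\delta (d\rho/\rho)\, \pi({\cal A}) = \beta \log(\delta/\eps)$, while larger diameters add a bounded term by local finiteness of $\mu$ and boundedness of $D$.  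Therefore $\PP(z \in F_\eps) \asymp \eps^\beta$ uniformly on $D_\delta$, verifying condition~1 of Lemma~\ref{bef}.

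\textbf{Second moment and area condition.}  Poisson inclusion--exclusion gives
$$ \PP(y, z \in F_\eps) = \exp\bigl( -\lambda_\eps(y) - \lambda_\eps(z) + \lambda_\eps^{(2)}(y,z) \bigr), $$
where $\lambda_\eps^{(2)}(y,z)$ is the $\mu$-measure of loops in $D$ of diameter at least $\eps$ whose interior contains both $y$ and $z$.  Writing $\eta = |y-z|$, loops of diameter below $\eta$ cannot contain both, so only scales $\rho \ge \eta$ contribute; for these scales, the symmetric difference of the events ``$y \in \inside{}$'' and ``$z \in \inside{}$'' has $d^2 z_0$-measure bounded by the area of the $(\eta/\rho)$-neighbourhood of $\gamma_0$, i.e.\ by $\leb(\{u : d(u, \gamma_0) \le \eta/\rho\})$.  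Substituting $r = \eta/\rho$ gives $\lambda_\eps^{(2)}(y,z) = \beta \log(1/\eta) + O(1)$, whence $\PP(y, z \in F_\eps) \le C \eps^{2\beta} \eta^{-\beta}$, which is condition~2.  For condition~3, the same shift estimate applied to $\{y \in \inside{\gamma},\, z \notin \inside{\gamma}\}$ with $\eta \le \eps$ yields
$$ \mu\bigl( y \in \inside{\gamma},\, z \notin \inside{\gamma},\, |\gamma| \ge \eps,\, \gamma \subset D \bigr) \le \int_0^{\eta/\eps} \frac{dr}{r}\, \pi\bigl(\leb(\{u : d(u, \gamma_0) \le r\})\bigr), $$
which is bounded uniformly by the thinness of the soup.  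Hence $\PP(y \in F_\eps \mid z \in F_\eps) \ge c > 0$ on $\B(z, \eps)$ and integration over $\B(z, \eps) \cap D_\delta$ gives condition~3.

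\textbf{Conclusion.}  When $\beta < 2$, Lemma~\ref{bef} applied on each $D_\delta$ yields $\dim (F \cap D_\delta) \le 2 - \beta$ almost surely and $\dim (F \cap D_\delta) \ge 2 - \beta$ with positive probability; letting $\delta \to 0$ then gives $\dim (F) = 2 - \beta$ with positive probability.  A 0-1 argument parallel to the one in Section~3.4, built on a sequence of disjoint disks accumulating at $\partial D$ in which the positive-probability event is reproduced independently and then Borel--Cantelli, upgrades this to an almost sure equality.  When $\beta \ge 2$, conditions~1 and 3 with $\alpha > 2$ give that the Hausdorff dimension of $F$ is $0$, matching $\max(0, 2-\beta) = 0$.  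I expect the main obstacle to be the sharp identification of the coefficient $\beta$ in $\lambda_\eps^{(2)}(y,z) = \beta \log(1/\eta) + O(1)$: the leading order must come out as $\beta$ and not $2\beta$ or $0$, which forces a careful scale-by-scale inclusion-exclusion, and the thinness hypothesis is precisely the input needed for the boundary corrections to sum to a finite $O(1)$.
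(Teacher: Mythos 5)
Your proof is correct, and it follows the same overall skeleton as the paper's (sketched) argument: the monotone approximation $F_\eps$ by removing only loops of diameter at least $\eps$, first- and second-moment estimates feeding into Lemma~\ref{bef} with $\alpha=\beta$, and a 0--1/countable-union argument to upgrade to an almost sure statement. Where you genuinely diverge is in the second-moment step and in condition~3. The paper bounds $\PP(x,y\in F_\eps)$ by splitting the soup into three independent spatial families (loops staying within $r=|x-y|/2$ of $x$, within $r$ of $y$, and loops staying far from both) and multiplying the three resulting one-point estimates, giving $(\eps/r)^\beta(\eps/r)^\beta r^\beta$; you instead exploit the exact Poisson identity $\PP(y,z\in F_\eps)=\exp(-\lambda_\eps(y)-\lambda_\eps(z)+\lambda^{(2)}_\eps(y,z))$ and bound $\lambda^{(2)}_\eps$, which is arguably cleaner and avoids any correlation argument; note that for condition~2 you only need the trivial inclusion (a loop surrounding both points has diameter at least $|y-z|$, so $\lambda^{(2)}_\eps\le\beta\log(1/|y-z|)+O(1)$), so your symmetric-difference/thinness estimate there is superfluous, though harmless. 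On the other hand, your verification of condition~3 — the conditional probability $\PP(y\in F_\eps\mid z\in F_\eps)=\exp(-\mu(A_y\setminus A_z))$ being bounded below via the $r$-neighbourhood integral, which is finite precisely by the thinness hypothesis — is a point the paper's outline leaves implicit, and it is a real use of thinness: the carpet trick ($x\in C_{\eps/2}\Rightarrow\B(x,\eps/2)\subset C_\eps$) does not transfer to $F_\eps$, so some such argument is needed. Minor caveats: the case $\beta=2$ needs the $\alpha\le 2$ branch of Lemma~\ref{bef} rather than the $\alpha>2$ one, and boundary effects are swept into the $D_\delta$ exhaustion, exactly as in the paper.
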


This result was proved in John Thacker's Ph.D. thesis \cite {Th} 
in the context of the Brownian loop-soup.
Since the general proof is essentially identical,
we only give an outline. The proof
is a direct application of the second moment method, in the same spirit as before. 
Things are in fact simpler here, since the loops do not interact, i.e. 
a point $x$ is in the remaining set if and only no loop in the 
Poisson point process belongs to the set of loops $R(x)$ that do not surround $x$.

\begin {proof}
We define $F_\eps$ to be the set obtained by removing from $D$ 
only the interior of the set of loops of diameter greater than $\eps$ 
in the soup, and we apply the second moment method 
(note that $F = \cap_\eps F_\eps$). 
For a given $z$ at positive distance from the boundary of $D$, 
the probability that $z \in F_\eps$ is equal to the probability 
that no loop with diameter greater than $\eps$ in the loop-soup has $z$ 
in its interior i.e. to 
$$ \exp ( - \mu_D ( \{ \gamma \ : \ z \in O( \gamma), |\gamma| > \eps \} )).$$
Using Lemma~\ref{l1}, it follows easily
that, up to multiplicative constants that depend on $|D|$ and
on the distance between $z$ and $\partial D$,
the probability that $z \in F_\eps$ is comparable to 
$\eps^{\beta}$. This is the first-moment estimate.

To bound the second moment, we take two points $x$ and $y$, 
we define $r = d (x,y)/2$ and decompose the loop-soup into three pieces: 
those loops that remain at distance less than $r$ of $x$, 
those loops that remain at distance less than $r$ of $y$, 
and those loops that never come closer to $2r$ of the midpoint between $x$ and $y$.
 We then use the previous argument to deduce that the probability 
that both $x$ and $y$ are in $F_\eps$ is no larger than a constant 
times $(\eps/r)^\beta \times (\eps/r)^\beta \times r^\beta$.  

A $0-1$ type argument analogous to the one that we used for the approximate carpet 
completes the proof.
\end {proof}

\subsection {Varying the intensity of a random soup} \label{sec_intensity}

Suppose that $\mu$ is fixed measure as before, defined from a finite measure $\pi$ as in Lemma \ref {l1}. We are now going to introduce a positive real parameter $c$, and consider for each value of $c$ a soup with intensity $c\mu$ (and its approximate carpet if it is a subcritical soup). 

We can couple realizations of the soups for all $c$ in an increasing manner,
with richer soups for larger $c$.
To see this, one can for instance first define a Poisson point process 
$((\gamma_j, t_j), j \in J)$ with intensity 
$\mu \otimes dt$ on ${\mathcal U} \times [0,t ]$, and then for each $c$, define  
$$ \Gamma_c = ( \gamma_j, j \in J_c ) \hbox { where } J_c = \{ j \in J \ : \ t_j \le c \}$$
and note that $\Gamma_c$ is a Poisson point process with intensity $c \mu$. 
 
For each value of $c$ such that the approximate carpet $C(c)$ is non-empty, its fractal dimension $d(c)$ is almost surely constant (the function $d$ of course depends on the actual choice of $\mu$). It is easy to see 
(this follows for instance from our estimates on the remaining set)
that when $c$ is large, then the approximate carpet is almost surely empty.

On the other hand, comparing the soup with a deterministic well-studied fractal percolation model (see e.g. \cite {Wcras, Wln}), one can show that when $c$ is very small, the approximate carpet is almost surely not empty. Hence, there exists a finite positive critical value $c_0$ that separates these two regimes (this is the origin of the ``subcritical'' terminology). It is in fact not difficult (at least when the measure $\pi$ is invariant under some rotations) to adapt the arguments developed for fractal percolation to prove that the approximate carpet is not empty when $c= c_0$ \cite {Wln}. 

The goal of this section is to  derive first-order estimates for the exponent $d(c)$ when $c \to 0+$.
 Intuitively, in this limit a loop in the soup will typically not intersect any other loop of comparable size. Therefore, the holes in the approximate carpet will look like the interiors of the loops themselves, and this will lead to an approximation of 
its dimension
in terms of the mean area of the interior of loops under the measure $\pi$. The approximate carpet will be rather close to the remaining set.
We will prove:
\begin {proposition}
\label {eq}
Let $\delta (c)$ be the dimension of the remaining set. 
When $c \to 0+$, $d(c) = \delta (c) + o (c) = 2 - c \beta(\pi) + o(c)$.
\end {proposition}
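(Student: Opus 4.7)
Since $G \subset F$ by the very definition of the carpet, Lemma~\ref{coeff_c} applied to the intensity $c\mu$ (so that $\beta(c\mu) = c\beta(\pi)$) immediately gives $d(c) \le \delta(c) = 2 - c\beta(\pi)$ for every $c$ small enough that $2 - c\beta(\pi) > 0$.

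\textbf{Lower bound.} By Section~3, $d(c) = 2 - \alpha(c)$ where $\alpha(c)$ is the scaling exponent of $P(0 \in C_\eps \cap \U)$ as $\eps \to 0$. The inclusion $C \subset F$ combined with Lemma~\ref{coeff_c} gives $P(0 \in C_\eps) \le P(0 \in F_\eps) \asymp \eps^{c\beta(\pi)}$, hence $\alpha(c) \ge c\beta(\pi)$. The plan is to prove the matching bound $\alpha(c) \le c\beta(\pi) + O(c^2)$, which yields $d(c) \ge 2 - c\beta(\pi) - O(c^2) = \delta(c) + o(c)$ as $c \to 0$.

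The idea for the refined first-moment lower bound is a dyadic multi-scale argument exploiting the independence, in the Poisson soup, of loops of disjoint scales. Split $\Gamma_\U$ into independent sub-Poisson processes $\Gamma^{(k)}$ of loops of diameter in $[2^{-k-1},2^{-k}]$, for $k = 1,\dots,K$ with $2^{-K} \asymp \eps$. For each $k$, let $A_k$ be the event that no single loop of $\Gamma^{(k)}$ surrounds the origin, and $B_k$ the event that no cluster of two or more loops of $\Gamma^{(k)}$ with intersecting interiors forms a connected surrounding configuration. Both are decreasing events on $\Gamma^{(k)}$. By scale invariance and the computation in the proof of Lemma~\ref{coeff_c}, $P(A_k) = \exp(-c\beta(\pi)\log 2)$, independent of $k$. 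By a Mecke-type first-moment estimate, $P(B_k^c) \le \sum_{j \ge 2} c^j M_j / j!$, where $M_j$ is the $\mu^{\otimes j}$-mass at unit scale of $j$-tuples of loops forming a cluster surrounding the origin; the product representation from Lemma~\ref{l1} together with $|\pi|<\infty$ and the thinness assumption (Lemma~\ref{mu_cross}) give $M_j < \infty$, uniformly in $k$, and hence $P(B_k^c) \le C c^2$ for small $c$. By FKG and the across-scale independence,
$$P(0 \in C_\eps) \ge \prod_{k=1}^K P(A_k \cap B_k) \ge \prod_{k=1}^K P(A_k)\,P(B_k) \ge \left(2^{-c\beta(\pi)}(1-Cc^2)\right)^K = \eps^{c\beta(\pi)+O(c^2)},$$
which gives $\alpha(c) \le c\beta(\pi)+O(c^2) = c\beta(\pi)+o(c)$, as required. (The passage from this first-moment asymptotic to the almost-sure value of $\dim(G)$ is handled by the second-moment estimate of Lemma~\ref{pa_alpha} adapted to the present family --- the same multi-scale decomposition applied to pairs $(x,y)$ decouples scales below $|x-y|$ into independent contributions and handles scales above by single-point factors at scale $|x-y|$ --- together with the $0$-$1$ law of Section~3.4.)

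\textbf{Main obstacle.} The delicate step is that the event $\bigcap_k (A_k \cap B_k)$ does not quite exclude clusters whose loops straddle different dyadic scales: a single loop at scale $2^{-k}$ almost surrounding the origin together with a much smaller loop ``closing the gap'' could in principle still separate the origin from $\partial\U$. One must therefore show that the $\mu^{\otimes 2}$-mass of such multi-scale clustering pairs is still controlled uniformly per log-scale. This reduces, using Lemma~\ref{l1} and scale invariance, to proving that the $\pi^{\otimes 2}$-mass of pairs $(\gamma_1,\gamma_2)$ of unit curves whose images under the translation/dilation group form an intersecting pair with combined surrounding of the origin is finite --- a computation that crucially uses the non-degeneracy condition $\mu \in \mathcal{M}$ and the finite total mass of $\pi$.
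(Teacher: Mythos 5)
Your upper bound ($G\subset F$ plus Lemma~\ref{coeff_c}) is fine and is exactly the paper's observation, and you correctly identify that the whole content of the lower bound is a first-moment estimate $\PP(0\in C_\eps)\ge k\,\eps^{c\beta(\pi)+o(c)}$ (the second-moment/0--1 machinery of Section~3 already gives $d(c)=2-\alpha(c)$, so your parenthetical about redoing it is unnecessary). The gap is in that first-moment estimate, and it is the one you flag yourself: the inclusion $\bigcap_k(A_k\cap B_k)\subset\{0\in C_\eps\}$ is simply false, so the FKG/product bound $\PP(0\in C_\eps)\ge\prod_k \PP(A_k\cap B_k)$ does not follow. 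A configuration separating the origin from $\partial\U$ is in general a chain of loops of many different dyadic scales (and possibly of arbitrarily many loops), while $A_k$ and $B_k$ only exclude surrounding loops or clusters formed entirely within one scale. Your proposed repair --- finiteness of the $\pi^{\otimes 2}$-mass of cross-scale intersecting pairs --- does not close this: surrounding configurations with $j$ loops spread over many scales occur for every $j$, and bounding their total probability is precisely the heart of the problem; a per-pair mass bound gives no control without a full cluster-expansion-type argument. Even within a fixed scale, the bound $\PP(B_k^c)\le\sum_{j\ge2}c^jM_j/j!$ is asserted without controlling the growth of $M_j$ in $j$: the $\mu^{\otimes j}$-mass of connected ordered $j$-tuples grows at least factorially, so convergence of the series for small $c$ requires a tree-graph-type estimate that you do not supply.

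The paper sidesteps the multi-scale combinatorics entirely, and this is the idea your proof is missing: a stochastic domination of arbitrary clusters by \emph{independent} sets of finite expected area. Using the sequential cluster exploration (Proposition~\ref{soup_via_many_soups}), each loop $\gamma_j$ of the soup is coupled with $\gamma_j^*$, the filling of its cluster inside an independent whole-plane soup of loops of smaller diameter; by construction every cluster of the original soup in $D$ is contained in some $\gamma_j^*$ with $\gamma_j\subset D$, and the pairs $(\gamma_j,\gamma_j^*)$ form a Poisson point process whose second-coordinate intensity $\mu_c^*$ is again translation- and scale-invariant. Subcriticality gives the polynomial tail $\pi_c^*(|\gamma^*|>R^l)\le \|\pi\| R^{-4l}$ of Lemma~\ref{15}, hence $\beta^*(c)=\pi_c^*({\cal A}(\gamma^*))<\infty$ and $\beta^*(c)\to\beta(\pi)$ as $c\to0+$; the event $\{0\in C_\eps\}$ is then bounded below by the event that no $\gamma_j^*$ surrounds $\B(0,\eps)$, which is a remaining-set computation (as in Lemma~\ref{coeff_c}) for the $\gamma^*$-soup and yields $\PP(0\in C_\eps)\ge k\,\eps^{c\beta^*(c)}$. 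If you wish to salvage your dyadic scheme, you would in effect have to prove such a domination of multi-scale clusters anyway, at which point the scale decomposition buys nothing.
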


Note that clearly $d(c) \le \delta (c)$ because the approximate carpet is a subset of the remaining set. 

\subsection {Discovering the clusters one by one}

We now describe different ways to ``progressively'' discover loops and clusters 
in a loop-soup.

Consider the soup in some bounded domain $D$. It contains
countably many curves $\gamma_i$, which can be ordered in decreasing order
of their diameter, so $|\gamma_1| > |\gamma_2| > \ldots$ (it is trivial to check that no two loops can have exactly the same diameter).
This induces
an ordering of the clusters of curves in the soup. We start with
$\gamma_1$ and consider its cluster $K_1 = \clus{\gamma_1}{\Gamma}$,
as defined in section~\ref{carpet_cluster}.
Then we take the smallest $i$ such that $\gamma_i$ is not contained in
$K_1$ and consider its cluster $K_2 = \clus{\gamma_i}{\Gamma}$, and so on. 
This yields an ordering of the loop-clusters. 
Again, it is possible to discover $\gamma_1, \gamma_2, \ldots$ 
progressively using the properties of Poisson point processes.
In particular, the conditional law of 
$\gamma_n, \gamma_{n+1}, \ldots $ given $\gamma_1, \ldots , \gamma_{n-1}$ 
is simply that of a Poisson point process of loops 
(ordered according to their diameter) with intensity 
$\mu_D( d \gamma) 1_{ |\gamma |< |\gamma_{n-1}| }$. 
In fact, we are now going to make a variation of this exploration procedure, 
where $K_1$, $K_2$ etc are discovered one by one.

\medbreak

Conditionally on $\gamma_1$ and $K_1$, the law of all other curves in the soup
(not contained in $K_1$) is the same as the law of a standard soup with
components required (i) to have size smaller
than $\gamma_1$ and (ii) not to cross $K_1$.
Hence the soup clusters admit the following equivalent
description (all the soups involved have intensity shape measure $c\pi$):

\begin{itemize}
\item Generate the soup $\Gamma'$, let $\gamma^1$ be
the largest diameter curve in $\Gamma'$.
\item Generate the soup $\Gamma_1$ (independent of $\Gamma'$),
let $\Gamma_1'$ be the subset of curves in $\Gamma_1$ that 
have diameter smaller than $\gamma^1$, and let 
$K_1 = \clus{\gamma^1}{\Gamma_1'}$ be the cluster of
$\gamma^1$ inside $\Gamma_1' \cup \{\gamma^1\} $.
\item Let $\gamma^2$ be
the largest diameter curve in $\Gamma'$ that does not meet $K_1$
\item Generate the soup $\Gamma_2$ (independent of $\Gamma'$, $\Gamma_1$), 
let $\Gamma_2'$ be the subset of curves in $\Gamma_2$ that 
have diameter smaller than $\gamma^2$ and do not
intersect $K_1$, and let
$K_2 = \clus{\gamma^2}{\Gamma_2'}$ be the cluster of
$\gamma^2$ inside $\Gamma_2' \cup \{\gamma^2\}$
\item and continue inductively.
\end{itemize}
This construction may seem unwieldy, as it requires countably
many new soups to construct one, but it will soon prove to be useful.
To summarize our exploration procedure:

\begin{Proposition}\label{soup_via_many_soups}
Let $\Gamma', \Gamma_1, \Gamma_2, \ldots$ be independent loop soups
with intensity $c \mu_D$ on a bounded domain $D$.
For each $n \ge 1$,
we define recursively a curve $\gamma^n$, a set of curves $\Gamma_n'$,
and a cluster $K_n$ as follows:
\begin{itemize}
\item $\gamma^n$ is the largest diameter curve in $\Gamma'$
that does not intersect $\cup_{i=1}^{n-1} K_i$
\item $\Gamma_n'$ is the set of curves in $\Gamma_n$ that have diameter
smaller than $\gamma^n$ and do not intersect $\cup_{i=1}^{n-1} K_i$
\item $K_n = \clus{\gamma^n}{\Gamma_n'}$ is the cluster of
$\gamma^n$ inside $\Gamma_n' \cup \{\gamma_n\}$
\end{itemize}
Then the sequence $(K_n)_{n\ge 1}$ has the same law as the clusters
of a soup of intensity $c\mu_D$ (ordered in decreasing order of
the largest curve they contain).
\end{Proposition}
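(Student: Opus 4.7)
The plan is to prove this by induction on $n$, relying on the ``strong Markov'' / restriction property of Poisson point processes: once a portion of the configuration has been discovered in a suitable exploration, the undiscovered curves still form a Poisson point process with the natural restricted intensity. The key one-cluster statement I will need is that if $\Gamma$ is a Poisson soup with intensity $c\mu_D$, $\gamma_1$ is its largest-diameter curve, and $K_1=\clus{\gamma_1}{\Gamma}$, then the conditional law of $\Gamma\setminus K_1$ given $(\gamma_1,K_1)$ is that of a Poisson point process with intensity $c\mu_D\cdot 1_{\{|\gamma|<|\gamma_1|,\ \gamma\cap K_1=\emptyset\}}$.

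For the base case I would verify that $(\gamma^1,K_1)$ built from $(\Gamma',\Gamma_1)$ has the same joint law as $(\gamma_1,K_1)$ built from a single soup $\Gamma$. Indeed $\gamma^1$, the largest curve of $\Gamma'$, has the same distribution as $\gamma_1$; conditional on its value, $\Gamma_1'$ is a Poisson process with intensity $c\mu_D\cdot 1_{|\gamma|<|\gamma^1|}$ by the standard restriction property, which is precisely the conditional law of $\Gamma\setminus\{\gamma_1\}$ given $\gamma_1$. Since $K_1$ is a deterministic measurable functional of the pair (seed, Poisson process of strictly smaller curves), the two joint laws coincide.

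For the inductive step I would assume that after $n$ rounds the joint laws of $(\gamma^i,K_i)_{i\le n}$ agree in the two procedures and that, in both, the conditional law of the ``curves still available as a new seed'' is Poisson with intensity $c\mu_D\cdot 1_{\{|\gamma|<|\gamma^n|,\ \gamma\cap\bigcup_{i\le n}K_i=\emptyset\}}$. In Procedure B this holds because, conditional on $\gamma^n$ and the already-built $K_1,\ldots,K_n$, the unused part of $\Gamma'$ is Poisson with this intensity and is independent of the fresh soup $\Gamma_{n+1}$ that will supply the smaller curves of $K_{n+1}$; in Procedure A it follows by iterating the strong-Markov statement of the previous paragraph. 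From this the seed $\gamma^{n+1}$ and its Procedure-A analogue $\gamma_{n+1}'$ have the same conditional law, and the base-case reasoning applied one level down then produces $K_{n+1}$ with the correct joint law.

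The main obstacle is formalizing the strong-Markov exploration, since a cluster $K_1$ may contain infinitely many small curves and is thus not obtained by a finite stopping rule. I would handle this by first truncating to curves of diameter at least $\eps$: on this truncated soup $\mu_D\cdot 1_{|\gamma|\ge\eps}$ is a finite measure, the cluster-building exploration terminates in finitely many steps, and the classical Poisson strong-Markov statement applies directly. Passing to the limit $\eps\to 0$ requires showing that the $\eps$-truncated clusters converge to the true cluster in a controlled way; this is where one uses thinness and the absence of infinite clusters (Lemma~\ref{mu_cross} and Corollary~\ref{thin_large_R}) to ensure that only finitely many curves of diameter $\ge\eps'$ are ever swallowed by $K_1$ as $\eps$ decreases past $\eps'$. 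A minor bookkeeping point is the discrepancy between ``$\gamma\cap\gamma'\neq\emptyset$'' and ``$O(\gamma)\cap O(\gamma')\neq\emptyset$'' in the cluster definition versus the ``does not intersect'' condition used in the statement of the proposition; under the almost-sure equivalence recalled earlier, this is harmless.
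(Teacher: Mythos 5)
Your proposal is correct and follows essentially the same route as the paper, which justifies the proposition by the very same key fact (conditionally on $\gamma_1$ and $K_1$, the undiscovered curves form a Poisson soup restricted to diameters smaller than $|\gamma_1|$ and to not crossing $K_1$) and then iterates cluster by cluster; the paper in fact states this informally without a detailed proof, so your truncation-to-diameter-$\ge\eps$ formalization and the remark about ``intersect'' versus ``cross'' only add detail beyond what the authors wrote.
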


Hence we can generate soup clusters of a soup by starting with a soup
$\Gamma$, selecting a subset of its curves,
and ``attaching'' to each certain subsets of independent soups $\Gamma_i$. 
If we attach instead
a larger subset or even the whole soup $\Gamma_i$, 
then this can only decrease the size of the corresponding carpet. 
This is what we will do in the next subsection. 

\subsection {Coupling with a ``soup of overlapping clusters''}

To formalize this idea, let us first
consider the product measure $\pi \otimes P_c$, on pairs $(\gamma, \Gamma)$
of one loop (``sampled'' from $\pi$)
and one loop-soup with intensity $c \mu$ {\bf in the entire plane}. 
We have seen that we can choose the finite  measure $\pi$ 
in such a way that it is supported on the set of loops of diameter $1$ 
contained in the square $[-1,1]^2$.
Let $\Gamma'$ be the set of all curves in $\Gamma$ that 
have diameter {smaller} than $1$ and let 
$\gamma^*$ be the ``filling'' of the cluster of $\gamma \cup \Gamma'$ that contains $\gamma$ (i.e. the closure of the complement of the unbounded connected component of the complement of the cluster). We denote by $\pi_c^*$ the measure under which 
$\gamma^*$ is defined.

\begin {lemma}\label{15}
\begin {itemize}
\item
When $c$ is small enough, then $\pi_c^* ( {\cal A} ( \gamma^*) ) < \infty $. 
We denote this quantity $\beta^* (c) $.
\item When  ${c \to 0+}$,  $\beta^*(c)$ converges to  $\beta( \pi)$.
\item There exist $c_1$ and $k$ such that for all $c < c_1$ and all $x>4$, 
the probability that there exists a cluster in $\Gamma'$ that crosses the ring 
$ \{ z \ : \  4 < |z| < x\}$ is bounded by  $k x^{-4}$.
\end {itemize}
\end {lemma}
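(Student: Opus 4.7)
The plan is to prove all three statements together. The starting point for (i) and (ii) is the subadditivity
$$\mathcal{A}(\gamma^*) \le \mathcal{A}(\gamma) + \sum_{\gamma_i \in \Gamma' \cap \mathrm{cluster}(\gamma)} \mathcal{A}(\gamma_i),$$
which comes from the fact that the filling of a union of simple loops with pairwise intersecting interiors is just the union of their fillings. Integrating against $\pi \otimes P_c$, the first term gives exactly $\beta(\pi)$, and the task reduces to controlling the expected sum on the right.

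I decompose this sum by the chain-distance $n$ of $\gamma_i$ to $\gamma$ in the cluster's intersection graph, and apply iterated Mecke--Slivnyak to get
$$E_n \le c^n \int \pi(d\gamma_0)\,\mu_{<1}^{\otimes n}(d\gamma_1\cdots d\gamma_n)\,\mathcal{A}(\gamma_n) \prod_{j=0}^{n-1} \mathbf{1}_{\gamma_j \cap \gamma_{j+1} \ne \emptyset},$$
where $\mu_{<1}$ denotes the restriction of $\mu$ to loops of diameter less than $1$. The technical core is a scale-invariant one-step estimate: using the decomposition $\gamma' = \rho(z + \gamma^{(0)})$ from Lemma~\ref{l1},
$$\int \mu_{<1}(d\gamma')\,\mathcal{A}(\gamma')\,\mathbf{1}_{\gamma' \cap A \ne \emptyset} \le \beta(\pi) \int_0^1 \frac{d\rho}{\rho}\, a_\rho(A),$$
with $a_\rho(A)$ the area of the $\rho$-neighborhood of $A$. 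For $A$ of diameter $s \le 1$, splitting the $\rho$-integral at $\rho=s$ shows that the large-$\rho$ regime contributes a universal constant independent of the shape of $A$, while the small-$\rho$ regime reduces by rescaling to $J(A^{(0)}) = \int_0^1 (du/u)\, a_u(A^{(0)})$, which is $\pi$-integrable by thinness (Lemma~\ref{mu_cross}). Iterating this estimate through the $n$-fold integral in $\pi\otimes P_c$-expectation produces $E_n \le D(cC)^n$ for constants $C,D$ depending only on $\pi$, so $\sum_n E_n$ converges as a geometric series for $c<1/C$, proving (i); it also tends to $0$ as $c \to 0$, which together with the trivial lower bound $\beta^*(c) \ge \beta(\pi)$ (from $\gamma^* \supset \gamma$) proves (ii).

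For (iii), I apply Mecke to the (almost surely unique) loop of largest diameter in each crossing cluster:
$$P(\exists \text{ crossing cluster}) \le c\int \mu_{<1}(d\gamma)\,P\bigl(\mathrm{cluster}(\gamma) \text{ reaches } \cir(0,x)\bigr),$$
with $\gamma$ constrained to meet the annulus. A cluster reaching radius $x$ from a loop near radius $4$ must contain a chain of loops whose diameters sum to at least $x-4$; a careful chain count via the same scale-invariant estimate of Step~2 (separating macroscopic from microscopic intermediate loops by a dyadic scale decomposition) shows that for $c<c_1$ small enough the probability decays at least polynomially of any prescribed order in $x$. Combined with the $O(x^2)$ spatial integration, this yields the claimed $kx^{-4}$ bound (with considerable slack).

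The main obstacle is the scale-invariant one-step bound. Because a small parent $A$ can spawn offspring of order-one size, the one-step fertility does not scale with $\mathcal{A}(A)$; there is an irreducible universal contribution independent of the parent's scale. In fact the number of direct offspring of any fixed loop is almost surely infinite, as the soup is dense at small scales, so only area-weighted quantities are finite and no pointwise Galton--Watson iteration is available. The correct iteration must be carried out in $\pi\otimes P_c$-expectation, carefully separating the universal term (each copy of which costs one factor of $c$) from the shape-dependent thin-soup contribution (geometrically small). The same delicacy pervades the chain count used for (iii), which must be performed at the level of integrated probabilities rather than pointwise.
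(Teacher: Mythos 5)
There is a genuine gap, and it sits at the very first line of your argument for the first two items: the claimed subadditivity $\mathcal{A}(\gamma^*) \le \mathcal{A}(\gamma) + \sum_i \mathcal{A}(\gamma_i)$ is false, because the filling of a cluster is \emph{not} the union of the fillings of its loops. A chain of loops can surround a region covered by none of them: take a ``necklace'' of $n \approx 1/r$ circles of radius $r$ centred along a circle of radius $1$, with consecutive interiors overlapping; this is a single cluster (consecutive loops cross), its filling contains the whole unit disc, so $\mathcal{A}(\gamma^*) \approx \pi$, while $\sum_i \mathcal{A}(\gamma_i) = O(r) \to 0$. This trapping of area by chains of small loops is exactly the phenomenon that makes the cluster fillings nontrivial, and it is why the paper does not try to control $\mathcal{A}(\gamma^*)$ by summing loop areas at all: instead it controls the \emph{diameter} tail of $\gamma^*$, via $\pi_c^*(|\gamma^*| > R^l) \le \|\pi\| R^{-4l}$, and then uses $\mathcal{A}(\gamma^*) \le 4|\gamma^*|^2$ to get finiteness of $\beta^*(c)$; the convergence $\beta^*(c) \to \beta(\pi)$ then follows from the monotone coupling in $c$ (under which $\gamma^*(c)$ decreases to the filling of $\gamma$ as $c\downarrow 0$) together with monotone convergence, not from a term-by-term expansion. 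So your proofs of items one and two collapse, and no amount of care in the Mecke iteration repairs them, since the quantity you are iterating simply does not dominate $\mathcal{A}(\gamma^*)$.

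The third item, which in the paper is the engine of the whole lemma, is also not actually proved in your proposal: the step ``a careful chain count via the same scale-invariant estimate \dots shows that the probability decays at least polynomially of any prescribed order in $x$'' is precisely the hard quantitative content, and it is asserted rather than carried out (note also that $\mu_{<1}$ gives infinite mass to small loops meeting the annulus, so the Mecke bound needs the integrand's decay in the loop's diameter, i.e.\ again the missing chain estimate). The paper obtains the crossing bound in a few lines by a completely different and softer route: subcriticality gives some $c_0$ and $R>4$ for which the $c_0$-soup fails to have a cluster crossing $\{4<|z|<R\}$ with probability $p>0$; writing the $c_0$-soup as a union of $k$ independent $c_0/k$-soups forces the crossing probability at intensity $c_1=c_0/k$ below $1-p^{1/k}\le R^{-4}$; and then scale invariance plus the fact that loops of $\Gamma'$ have diameter less than $1$ (so crossings of the disjoint annuli $\{4R^l<|z|<R^{l+1}\}$ are independent) yields the geometric decay $R^{-4l}$, hence the $kx^{-4}$ bound. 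If you want to salvage your programme, you would need to (a) replace the false subadditivity by a bound of $\mathcal{A}(\gamma^*)$ in terms of $|\gamma^*|^2$ and a tail estimate on $|\gamma^*|$, and (b) actually supply the multi-scale chain estimate for item three or, more simply, use the subcriticality assumption and the thinning argument as the paper does.
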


\begin {proof}
Consider some $c_0 > 0$ so that the loop-soup $\Gamma (c_0)$ 
with intensity $c_0 \mu$ is subcritical.  We know there exists some 
$R>4$ such that the probability $p$ that 
no cluster in $\Gamma (c_0)$ 
traverses the ring $\{ z \ : \ 4 < |z| < R \}$ 
is strictly positive. 

For any $k$, $\Gamma (c_0)$ can also be constructed as the union of
$k$ independent soups of intensity $c_1= c_0 / k$.
Hence we can find a sufficiently large $k$ so that
the probability that the loop-soup $\Gamma (c_1 )$ 
with intensity $c_1  \mu$ contains a cluster that crosses the ring 
is smaller than $1- p^{1/k} \le R^{-4}$.

Using scale-invariance, it follows that for all $l \ge 1$,
 the probability that $\Gamma (c_1)$ contains a cluster that crosses the ring 
$A_l= \{z \ : \ 4 R^l < |z| < R^{l+1} \}$ is smaller than $R^{-4}$. 
Clearly, the same is true if one looks only at the loop-soup $\Gamma'$ 
consisting only of the loops in $\Gamma$ of diameter smaller than $1$.
But the events that  $\Gamma'$ contains a cluster that crosses the ring $A_l$
for $l=0, 1, 2, \ldots$ are in fact independent: they depend only 
on those loops that intersect each of the rings, 
and these sets of loops are disjoint since all loops 
have diameter smaller than $1$. 
We conclude that the probability that a cluster of $\Gamma'$ crosses the ring 
$\{ z \  : \  4 < |z | < R^{l} \}$ is bounded by $R^{-4l}$. This immediately implies the last item of the lemma.

Since $\pi$ is supported on loops of diameter 1 in $[-1,1]^2$,
for $c \le c_1$ and all $l \ge 1$, we have 
$$ \pi_c^* ( \{ | \gamma^* | > R^l \} ) \le \| \pi \| \times R^{-4l}$$
and therefore $\pi_c^*( {\cal A} ( \gamma^* )) \le  4 \pi_c^* ( |\gamma^*|^2) < \infty$.

Finally, observe that if we couple the realizations of $\Gamma'(c)$ for all $c$, 
then almost surely
$\gamma^* (c)$ converges to the filling of the initial loop $\gamma$ when $c \to 0+$,
as all other loops disappear. 
This follows, for instance, from the fact (proved using the same argument as above)
that for any given ring
$\{ z \ : \  4u <  | z - z_0 | < Ru \}$, 
the probability that a loop-soup cluster of $\Gamma (c)$ 
crosses the ring goes to zero as $c$ goes to $0$. 
We conclude, using monotone convergence, that
$$ 
\lim_{c \to 0+} \pi_c^* ( {\cal A} ( \gamma^* ) ) = 
\pi ( {\cal A}(\gamma)) = \beta ( \pi).
$$
\end {proof}

We now construct the measure $\mu_c^*$ from $\mu$ similarly
to the way $\pi_c^*$ was constructed from $\pi$:
Consider the product measure $c \mu \otimes P_c$, on pairs $(\gamma, \Gamma)$
of one loop (``sampled'' from $c\mu$)
and one loop-soup with intensity $c \mu$ {in the entire plane}. 
Let $\Gamma' ( \gamma)$ be the set of all curves in $\Gamma$ that 
have diameter {smaller} than $\gamma$ and let 
$\gamma^*$ be the ``filling'' of the cluster of $\gamma \cup \Gamma'(\gamma)$ that contains $\gamma$. We denote by $\mu_c^*$ the measure under which 
$\gamma^*$ is defined. Clearly, scale-invariance and translation-invariance of $\mu$ (and of the loop-soup) imply that $\mu_c^*$ is also scale-invariant, translation-invariant, and that it can be constructed (as in section 2) from $\pi_c^*$ (note that the previous lemma ensures that $  \pi^*_c ( |\gamma^*|^2) < \infty$).

In fact, this definition makes it possible to define a Poisson point process of 
pairs $(\gamma_j, \gamma_j^*)$  i.e. to couple a loop-soup with intensity $c\mu$ with a loop-soup with intensity $\mu_c^*$ in such a way that for each $j$, $\gamma_j \subset \gamma_j^*$ (basically, each loop $\gamma_j$ is extended by an independent soup of loops of smaller diameter in the whole plane). 
If we keep those $\gamma_j$ that are in a domain $D$, 
we get a loop-soup $\Gamma_D$ with intensity $c\mu_D$. 
But mind that the corresponding $\gamma_j^*$'s do not necessarily stay in $D$. 
However, they are not likely to be very large, as we now show.
To keep things simple, we assume for the rest of this section that $D$ is
the unit disk; we can do this without loss of generality, since the dimension
of the carpet does not depend on the domain.

\begin {lemma}
Let $c_0$ and $R$ be defined as in the proof of Lemma~\ref{15}, 
and consider the coupling $(\gamma_j, \gamma_j^*)$ defined above.
If $c$ is small enough, then the probability that 
for all $j$ such that $\gamma_j \subset D$, the diameter of $\gamma_j^*$ does not exceed $2R$ 
is strictly positive.
\end {lemma}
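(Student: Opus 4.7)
The plan is to bound the expectation of the number $N$ of ``bad'' indices and apply Markov's inequality. Let $N$ denote the number of $j$ such that $\gamma_j \subset D$ and $|\gamma_j^*| > 2R$. Since each $\gamma_j^*$ is constructed from $\gamma_j$ using an independent auxiliary soup, the pairs $(\gamma_j, \gamma_j^*)$ form a marked Poisson point process, so by Campbell's formula
$$ E[N] = c \int 1_{\gamma \subset D}\, P(|\gamma^*| > 2R \mid \gamma)\, \mu(d\gamma). $$
If one shows that $E[N] = O(c)$ as $c \to 0+$, then for $c$ small enough $E[N] < 1$, and Markov's inequality yields $P(N = 0) \ge 1 - E[N] > 0$, which is the desired conclusion.

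To estimate the conditional probability $P(|\gamma^*| > 2R \mid \gamma)$ for a loop $\gamma$ of diameter $s$ contained in $D = \U$ (so $s \le 2$), one uses the scale and translation invariance of $\mu$: after rescaling by $1/s$ and recentering, the event becomes the event that a diameter-$1$ loop attached to an independent soup of loops of diameter smaller than $1$ produces a cluster of diameter larger than $2R/s$. Since $\gamma$ lies inside a ball of radius $s$ around any of its points, $|\gamma^*| > 2R$ forces some chain of loops in the auxiliary soup (each of diameter smaller than $s$) to extend from $\gamma$ out to distance of order $R$; after rescaling, this is a cluster of the smaller-loop soup that crosses a ring of inner radius of order $1$ and outer radius of order $R/s \ge R/2$. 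Provided $c < c_1$ and $R$ is large enough that $R/s > 4$, Lemma~\ref{15}(iii) then gives
$$ P(|\gamma^*| > 2R \mid \gamma) \;\le\; k\,(R/s)^{-4} \;\le\; k\, s^4 R^{-4}. $$

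Substituting this estimate yields $E[N] \le c\, k R^{-4} \int 1_{\gamma \subset D} |\gamma|^4\, \mu(d\gamma)$, so the proof reduces to checking finiteness of the last integral. Using the product decomposition of Lemma~\ref{l1} with $\pi$ supported on loops of diameter $1$ inside $[-1,1]^2$, and writing $\gamma = \rho(z+\gamma_0)$, the constraint $\gamma \subset \U$ forces $\rho \le 2$ and restricts $z$ to a set of Lebesgue area at most $(\text{area of }\U)/\rho^2$, whence
$$ \int 1_{\gamma \subset D} |\gamma|^4\, \mu(d\gamma) \;\le\; \|\pi\| \int_0^2 \frac{d\rho}{\rho}\cdot \frac{\pi}{\rho^2}\cdot \rho^4 \;=\; 2\pi\|\pi\| \;<\; \infty. $$
The main obstacle is the scaling step: one must carefully transfer the conclusion of Lemma~\ref{15}(iii), which is formulated in a specific geometric situation (a diameter-$1$ loop in $[-1,1]^2$ and a ring centered at the origin), to an arbitrary loop $\gamma_j \subset D$ via the scale and translation invariance of $\mu$, and verify that the event $\{|\gamma^*| > 2R\}$ really does force some cluster of the attached sub-$s$ soup to cross an appropriate ring around $\gamma_j$. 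Once this geometric reduction is done, the first-moment estimate and Markov's inequality close the argument.
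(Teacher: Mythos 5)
Your proposal is correct and follows essentially the same route as the paper: a first-moment bound on the mass of ``bad'' pairs $\{(\gamma,\gamma^*): \gamma\subset D,\ |\gamma^*|>2R\}$, obtained by combining the ring-crossing estimate of Lemma~\ref{15} with scale invariance and the fact that the $\mu$-mass of loops in $D$ of diameter $s$ weighted by $s^4$ is integrable (the paper does this by a dyadic-scale sum, you by integrating the product representation, which is the same computation). Your final step via Markov's inequality is interchangeable with the Poisson zero-probability argument, and your flagged geometric reduction to Lemma~\ref{15}(iii) is treated at the same level of detail as in the paper itself.
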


\begin {proof}
This is in fact a simple consequence of the last statement of Lemma \ref {15}. It suffices to show that the set 
$$ \{ (\gamma, \gamma^*) \  : \  |\gamma^*| > 2R, \  \gamma \subset D \} $$
has finite mass. 
Note that because of scale-invariance,  the $\mu$-mass of the set of loops in $D$ with diameter between $2^{-l-1}$ and $2^{-l}$ grows 
slower than $O(2^{2l})$ as $l \to \infty$. 
But as $\sum_l 4^l \times (2^{-l}/ 2R)^{-4}$ converges, the lemma follows readily.  
\end {proof}

We are now ready to prove the following lemma that will enable us to conclude the proof 
of Proposition \ref {eq}. 

\begin {lemma}
Let $\Gamma_D$ be a subcritical soup with intensity $c \mu$ in the unit disk, 
and let $C_\eps$ defined as before. For $c$ small enough, 
there exists  a constant $k$ such that for all small $\eps$,
$\PP ( 0 \in C_\eps) \ge 
 k \eps^{c\beta^*(c)}$,
 where $\beta^* (c)=  \pi_c^* ( {\cal A} ( \gamma^*) )$.
\end{lemma}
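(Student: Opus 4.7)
We apply the second-moment framework of Lemma \ref{coeff_c} to the marked Poisson process $(\gamma_j, \gamma_j^*)$ defined just before the lemma, treating the enlargements $\gamma_j^*$ as ``super-loops'' whose fillings dominate those of the real clusters of $\Gamma$. Introduce the favourable event
\[ E_\eps^* = \{\forall j \text{ with } \gamma_j \subset D : |\gamma_j^*| \le 2\eps \text{ or } 0 \notin \gamma_j^* \cup O(\gamma_j^*)\}. \]
We first show $E_\eps^* \subseteq \{0 \in C_\eps\}$ in a suitable coupling. If $0 \notin C_\eps$, there is a cluster $\tilde K$ of $\Gamma_{D \setminus \B(0,\eps)}$ whose filling contains $0$; then $|\tilde K| \ge 2\eps$, and its largest loop $\gamma_{j_0}$ lies in $D \setminus \B(0,\eps) \subset D$. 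Choosing (by the Slivnyak--Mecke identity) $\gamma_{j_0}^*$ to be the filling of the cluster of $\gamma_{j_0}$ in the loops of $\Gamma$ of diameter smaller than $|\gamma_{j_0}|$, we retain the correct marginal intensity and obtain $\tilde K \subseteq \gamma_{j_0}^*$; thus $0 \in \gamma_{j_0}^* \cup O(\gamma_{j_0}^*)$ and $|\gamma_{j_0}^*| \ge |\tilde K| \ge 2\eps$, contradicting $E_\eps^*$.

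Next, we bound $\PP(E_\eps^*) \ge \exp(-\nu_c(A_\eps))$, where $\nu_c(d\gamma,d\gamma^*) = c\mu(d\gamma) \otimes \kappa_c(d\gamma^*|\gamma)$ is the intensity of the marked process and $A_\eps = \{(\gamma,\gamma^*): \gamma\subset D,\,|\gamma^*|>2\eps,\,0\in\gamma^*\cup O(\gamma^*)\}$. This is the Poisson void probability for the variant of the coupling in which the enlargements use genuinely independent auxiliary soups; Proposition \ref{soup_via_many_soups} lets us transfer the bound from this independent variant to the self-enlargement coupling used in the previous step, since it provides a simultaneous construction of $\Gamma$ and its cluster enlargements in which the auxiliary soups are independent of one another.

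Finally, we compute $\nu_c(A_\eps)$ using the product representation $\mu(d\gamma) = d^2z\otimes (d\rho/\rho)\otimes\pi(d\gamma_0)$ from Lemma \ref{l1} together with the scale- and translation-equivariance of the enlargement ($\gamma = \rho(z+\gamma_0)$ implies $\gamma^* \stackrel{d}{=} \rho(z+\gamma_0^*)$):
\[ \nu_c(A_\eps) = c\int d^2z\int_0^\infty \frac{d\rho}{\rho}\int\pi(d\gamma_0)\, \EE\bigl[1_{\rho(z+\gamma_0)\subset D}\,1_{\rho|\gamma_0^*|>2\eps}\,1_{-z\in\gamma_0^*\cup O(\gamma_0^*)}\bigr]. \]
Bounding the position integral by $\int 1_{-z\in\gamma_0^*\cup O(\gamma_0^*)}\,d^2z = {\cal A}(\gamma_0^*)$, using $\rho \le |D|$ (forced by $\gamma \subset D$) and the cutoff $\rho > 2\eps/|\gamma_0^*|$, we get
\[ \nu_c(A_\eps) \le c\,\pi_c^*\Bigl[{\cal A}(\gamma^*)\log\bigl(|D||\gamma^*|/(2\eps)\bigr)\Bigr] = c\beta^*(c)\log(1/\eps) + O(c), \]
the correction term being finite thanks to the tail bound $\pi_c^*(|\gamma^*|>x) \lesssim x^{-4}$ of Lemma \ref{15}. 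Combining yields $\PP(0\in C_\eps) \ge \PP(E_\eps^*) \ge k\eps^{c\beta^*(c)}$ with $k = e^{-O(c)} > 0$.

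The main obstacle is reconciling two apparently incompatible requirements: the pathwise domination $\tilde K \subseteq \gamma_{j_0}^*$ in the first step wants $\gamma_{j_0}^*$ to use $\Gamma$ itself as auxiliary soup, whereas the Poisson void probability in the second step wants truly independent auxiliary soups. Proposition \ref{soup_via_many_soups} is precisely the device that allows us to have both simultaneously, by exhibiting the cluster decomposition of $\Gamma$ built from independent ingredients.
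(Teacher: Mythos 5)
Your overall strategy is the paper's: dominate the clusters of the soup by the Poissonian enlargements $\gamma_j^*$ and reduce everything to an avoidance estimate for the process of $\gamma^*$'s, with exponent $c\beta^*(c)$. Your direct computation of $\nu_c(A_\eps)$, with the logarithmic weight controlled by the $x^{-4}$ tail from Lemma~\ref{15}, is a legitimate (and slightly different) way of handling the fact that the $\gamma_j^*$ need not stay in $D$; the paper instead conditions on the event that all enlargements stay in $4R\cdot D$ and exploits independence of the two sub-events. The genuine gap is where you glue your two steps together. Your favourable event $E_\eps^*$ is defined through the \emph{self}-enlargements: $\gamma_{j_0}^*$ is the filling of the cluster of $\gamma_{j_0}$ inside the smaller-diameter loops of $\Gamma$ itself. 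Under that marking the marks are strongly dependent (enlargements of loops in the same cluster share most of their loops), so the pairs $(\gamma_j,\gamma_j^*)$ do \emph{not} form a Poisson process and the void-probability bound $\PP(E_\eps^*)\ge \exp(-\nu_c(A_\eps))$ is not available. The Slivnyak--Mecke identity only gives that the \emph{expected number} of bad pairs equals $\nu_c(A_\eps)$, which says nothing about the avoidance probability once $\nu_c(A_\eps)\to\infty$ as $\eps\to 0$. Asserting that Proposition~\ref{soup_via_many_soups} ``transfers'' the bound is not a proof: that proposition is a statement about the law of the clusters, not about the joint law of the self-enlargements of all the loops, and the comparison of avoidance probabilities between the dependent and independent markings is exactly what would need an argument.

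The fix is to avoid self-enlargements altogether and run both steps inside the independent construction. Attach to every loop $\gamma_j$ of a base soup $\Gamma'$ (intensity $c\mu$, restricted to loops in $D$ when needed) an independent full-plane soup, and let $\gamma_j^*$ be the filling of the cluster of $\gamma_j$ inside the attached soup's loops of diameter smaller than $|\gamma_j|$: this is an honest marked Poisson process with intensity $\nu_c$, so the void probability is exactly $\exp(-\nu_c(A_\eps))$. Running the exploration of Proposition~\ref{soup_via_many_soups} with these same auxiliary soups produces a soup with law $c\mu_D$ each of whose clusters is contained in some $\gamma_j^*$ with $\gamma_j\subset D$; hence on the void event no cluster of that soup (restricted to $D\setminus\B(0,\eps)$) can surround $\B(0,\eps)$, so $0\in C_\eps$, and since the constructed soup has the correct law, $\PP(0\in C_\eps)\ge\exp(-\nu_c(A_\eps))$. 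With this rearrangement your estimate of $\nu_c(A_\eps)$ goes through and yields the lemma; this is precisely the paper's route, which your final paragraph gestures at but which your written chain of inequalities does not actually follow.
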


\begin {proof}
Let us consider the construction of the loop soup clusters $(K_n)$ in $D$ described in Proposition \ref {soup_via_many_soups}. Clearly, if we compare it with the 
coupling that we have just described, we can choose our coupling  $(\gamma_j, \gamma_j^*)$ in such a way that each $K_n$ is in fact a subset of one of the $\gamma_j^*$'s with $\gamma_j \subset D$. 
 One therefore has 
 $$ 
 P ( 0 \in C_\eps ) \ge P ( E') 
 $$
 where $E'$ denotes the event that none of the sets $\gamma_j^*$ with $\gamma_j \subset D$ surround the disc of radius $\eps$ 
(this is because each cluster in the initial loop-soup is contained in some $\gamma_j^*$).
 This last event depends on the Poisson point process of pairs $(\gamma_j, \gamma_j^*)$ only.

Let $E''$ denote the event that
 that for all $j$ such that $\gamma_j \subset D$, one has 
$\gamma_j^* \subset (4R) \cdot D$. Recall from the previous lemma that this event has a positive probability (provided $c$ is small enough).
 Let $E'''$ denote the event that none of the sets $\gamma_j^*$ that are subsets of $4RD$ and of diameter greater than $\eps$ surround the origin.
 This event is independent of $E''$, as they occur for disjoint set of pairs $(\gamma, \gamma^*)$. Note that $E''' \cap E'' \subset E' \cap E''$. Hence, 
 $$ P(E) \ge P ( E' \cap E'') \ge P (E''' \cap E'')  = P( E''') P (E'') .
 $$
 It therefore remains to estimate $P(E''')$. This is equivalent to 
an estimate concerning the remaining set of the Poisson point process of 
 sets $(\gamma^*)$ in $(4R) \cdot D$, and is obtained 
just as for the first-moment estimate in Lemma \ref{coeff_c}.  
 \end {proof}
 
We can now conclude the proof of Proposition \ref {eq}. Recall that $d(c) \le \delta (c ) = 2 - c \beta (\pi)$ because the remaining set contains the carpet. On the other hand, 
the previous lemma and the definition of $\beta (\pi)$ show that the $\alpha = 2 - d(c)$ corresponding to the loop-soup with intensity $c\pi$ is not larger than $c \beta^*(c) = c \beta(\pi) + o (c)$.

\subsection {Consequences for the Brownian loop-soup}

As we have already mentioned, in the case of the Brownian loop-soup, 
the connection between the boundaries of clusters derived in 
\cite {ShW,ShW2} (see also \cite {Wcras}) and the Schramm-Loewner Evolutions 
make it possible to describe the carpet via branching SLE-type paths \cite {Sh}. 
Schramm, Sheffield and Wilson \cite {SchShW} computed in fact the 
``expected dimension'' of the carpet, and our paper shows that this 
``expected dimension'' (the quantity that governs the first moment) is equal to
the almost sure Hausdorff dimension $d(c)$. 
Combining all these results, the expression derived 
in \cite {SchShW} implies that for $c\le 1$,
$$ 
d(c) = 2 - \frac {(3 \kappa - 8)(8 - \kappa)} {32 \kappa}
$$
where $c$ and $\kappa$ are related by the ``usual'' relation 
$
c(\kappa) = ({(3\kappa -8) (6 - \kappa)})/({2 \kappa})$. 
It follows that
$$
d(c) = 2 - \frac {c}{16} - \frac {1}{96} \left( 5 + c - \sqrt { 25 + c^2 - 26 c } \right). 
$$
Note that $d(c) = 2 - c/10 + o (c)$. 
As explained in \cite {Th} (in the context of the remaining set), the coefficient of the first order term $c/10$ is closely related to the mean area of the Brownian loop computed in \cite {GTF}.

Recall \cite {Be, Ladim} that the Hausdorff dimension of the SLE$_\kappa$ curve is almost surely equal to  $1 + \kappa/8$. Hence, in the Brownian loop-soup case, we also know the dimension of the boundary of the carpet. 

\begin {thebibliography}{99}

\bibitem {Be}
{V. Beffara (2008),
The dimensions of SLE curves, Ann. Probab. {\bf 36}, 1421-1452.}

\bibitem {BeThese}
{V. Beffara (2003),
Mouvement brownien plan, SLE, invariance conforme et dimensions fractales,
{\it Th\`ese de Doctorat de l'universit\'e Paris-Sud.}}

\bibitem {BC0}
{E. Broman, F. Camia (2008),
Large-N Limit of Crossing Probabilities, Discontinuity, and Asymptotic Behavior of Threshold Values in Mandelbrot's Fractal Percolation Process,
Electr. J. Probab., 
{\bf 13}, 980-999.}

\bibitem {BC}
{E. Broman, F. Camia (2009),
Connectivity properties of Poissonian random fractals, preprint.}

\bibitem {CCD}
{J.T. Chayes, L. Chayes, R. Durrett (1988),
Connectivity properties of Mandelbrot's percolation process,
{Probab. Th. Rel. Fields} {\bf  77},   307-324.}

\bibitem {Fa}
{K. Falconer, {\em The geometry of fractal sets}, Cambridge Univ. Press, 1985.}

\bibitem {GTF}
{C. Garban, J.A. Trujillo-Ferreras (2006),
The expected area of the Brownian loop is $\pi/5$,
Comm. Math. Phys. {\bf 264}, 797-810.}

\bibitem {J}
{S. Janson (1984),
Bounds on the distributions of extremal values of a scanning process, Stoch. Proc. Appl. {\bf 18}, 313-328.}

\bibitem {Ladim}
{G.F. Lawler (2009),
Multifractal analysis of the reverse flow for the Schramm-Loewner evolution, 
in Fractal geometry and stochastics IV (Bandt, Moerters, Zaehle Eds.), Progr. Probab. {\bf 61},
73-108.
}

\bibitem {LSW2}
{G.F. Lawler, O. Schramm, W. Werner (2001),
Values of Brownian intersection exponents II: Plane exponents,
Acta Mathematica {\bf 187}, 275-308.}

\bibitem {LSWrest}
{G.F. Lawler, O. Schramm, W. Werner (2003),
Conformal restriction properties. The chordal case,
J. Amer. Math. Soc., {\bf 16}, 917-955.}

\bibitem {LTF}
{G.F. Lawler, J.A. Trujillo-Ferreras (2007),
Random walk loop-soup, Trans. A.M.S. {\bf 359}, 767-787.}

\bibitem {LWls}
{G.F. Lawler, W. Werner (2004),
The Brownian loop-soup, 
Probab. Th. Rel. Fields {\bf 128}, 565-588.}

\bibitem {Ma}
{B.B. Mandelbrot,
{\em The Fractal Geometry of Nature},
Freeman, 1982.}

\bibitem {MR}
{R. Meester, R. Roy,
{\em Continuum Percolation},
CUP, 1996.}

\bibitem {MP}
{P. Moerters, Y. Peres, 
{\em Brownian motion},
Cambridge Univ. Press, 2009.}

\bibitem {P}
{Y. Peres (1996)
Remarks on intersection-equivalence and capacity-equivalence,
Ann. IHP Phys. Th. {\bf  64}, 339-347
}

\bibitem {SchShW}
{O. Schramm, S. Sheffield, D.B. Wilson (2009),
 Conformal radii in conformal loop ensembles, Comm. Math. Phys. {\bf 288}, 43-53.}

\bibitem {Sh}{
S. Sheffield (2009),
Exploration trees and conformal loop ensembles, 
Duke Math. J. {\bf 147}, 79-129.}

\bibitem {ShW}
{S. Sheffield, W. Werner (2010),
Conformal loop ensembles: The Markovian characterization,
preprint.}

\bibitem {ShW2}
{S. Sheffield, W. Werner (2010),
Conformal loop ensembles: The loop-soup construction, 
preprint.}

\bibitem {Th}
{J. Thacker (2006),
Properties of Brownian and Random Walk Loop Soups, 
{\it Ph.D.} thesis, Cornell University. }

\bibitem {Wcras}
{W. Werner (2003),
SLEs as boundaries of clusters of Brownian loops, 
C.R. Acad. Sci. Paris, Ser. I Math. {\bf 337}, 481-486.}

\bibitem {Wln}
{W. Werner (2006),
 Some recent aspects of conformally invariant systems, Les Houches summer school lecture notes July 2005, Mathematical Statistical Physics, Elsevier, 57-99.}
 
\bibitem {Wsal}
{W. Werner (2008),
The conformal invariant measure on self-avoiding loops, 
J. Amer. Math. Soc. {\bf 21}, 137-169.
}

\bibitem {ZS1}
{A.S. Zuev, A.F. Sidorenko
(1985),
 Continuous models of percolation theory. I (Russian) Teoret. Mat. Fiz. {\bf 62}, 76-86.}
 
\bibitem {ZS2}
{A.S. Zuev, A.F. Sidorenko
(1985),
 Continuous models of percolation theory. II (Russian) Teoret. Mat. Fiz. {\bf 62}, 253-262.}

\end{thebibliography}


D\'epartement de Math\'ematiques et Applications 

Ecole Normale Sup\'erieure 

45, rue d'Ulm

75230 Paris cedex 05 France






\medbreak


Laboratoire de Math\'ematiques 

B\^at. 425, Universit\'e Paris-Sud 

91405 Orsay cedex, France

\medbreak

serban.nacu@gmail.com

wendelin.werner@math.u-psud.fr

\end{document}